\documentclass[12pt, a4paper]{article}
\usepackage{amsfonts,amssymb,amsmath,amscd,latexsym,makeidx,theorem}

\usepackage{upref}
\usepackage{xcolor}
\usepackage{txfonts}
\usepackage{mathtools}
\mathtoolsset{showonlyrefs}

\usepackage{hyperref} 

\newtheorem{thm}{Theorem}[section]
\newtheorem{thmx}{Theorem}

\newtheorem{lem}[thm]{Lemma}
\newtheorem{prop}[thm]{Proposition}

\newtheorem{pb}{Open Question}

\newtheorem{defn}{Definition}[section]

\newenvironment{proof}{\noindent\emph{Proof.}}{\hfill$\square$\medskip}

\newcommand{\supp}{\mathrm{supp}}

\newcommand{\M}[1]{\mathcal{#1}}

\newcommand{\sph}{\mathbb{S}}
\newcommand{\de}{\partial}

\newcommand{\R}{\mathbb{R}}
\newcommand{\ve}{\varepsilon}

\newcommand{\cl}[1]{\overline{#1}}

\renewcommand{\(}{\left(}
\renewcommand{\)}{\right)}

\author{Ali Hyder\thanks{The author  acknowledges the support of the Department of Atomic Energy, Government of India, under Project Identification No. RTI 4014} 
\\ {\small Tata Institute of Fundamental Research,}\\{\small  Centre for Applicable Mathematics}\\ {\small \texttt{hyder@tifrbng.res.in}}\and  Luca Martinazzi \\{\small Dipartimento di Matematica}\\  {\small Sapienza, Universit\`a di Roma} \\ {\small \texttt{luca.martinazzi@uniroma1.it}}}

\title{Non-constancy and multiplicity of half-harmonic maps from intervals into the circle}

\begin{document}

\maketitle

\begin{abstract}
We study one-dimensional half-harmonic maps from the real line into the circle with prescribed exterior data. We show that, for every positive integer $k$, if two disjoint intervals are sufficiently close, there exist at least $k$ distinct non-constant half-harmonic maps with constant exterior data. More generally, we establish a multiplicity result for boundary data with energy below the critical threshold $2\pi$ by introducing a local relative degree and proving corresponding degree-jump estimates.

Working on a single interval and for boundary data arising as traces of finite Blaschke products, we investigate the existence and non-existence of energy minimizers in prescribed degree classes. 
\end{abstract}

\section{Introduction}
 
 Consider the space $\dot H^{\frac12}(\R, \mathbb{C})$ of measurable maps $u:\R\to \mathbb{C}$ such that 
$$[u]_{H^{\frac{1}{2}}}^2:= \int_{\R}\int_{\R}\frac{|u(x)-u(\xi)|^2}{(x-\xi)^2}dxd\xi<\infty,$$
and its subset of $\sph^1$-valued maps
$$\dot H^{\frac12}(\R, \sph^1) = \left\{u\in \dot H^{\frac12}(\R, \mathbb{C}): |u(x)|=1 \text{ for a.e. }x\in\R\right\},$$
endowed with the same seminorm. 
Following \cite{DLR}, we consider the fractional energy
\begin{equation}\label{12energy}
E(u):=\int_{\R}\left|(-\Delta)^\frac14u\right|^2dx, \quad u\in \dot H^{\frac12}(\R, \sph^1).
\end{equation}
It is well-known  (see \cite{HM} for details) that
\begin{equation}\label{12energybis}
E(u)=\|(-\Delta)^\frac14 u\|_{L^2}^2=\frac{1}{2\pi}\int_{\R}\int_{\R}\frac{|u(x)-u(y)|^2}{|x-y|^2}dxdy = \int_{\R^2_+} |\nabla \tilde u|^2 dx dy,\quad u\in \dot H^{\frac12}(\R, \sph^1),
\end{equation}
where $\tilde u\in \dot H^1(\R^2_+)$ is the finite-energy harmonic extension of $u$, given by the usual Poisson formula:
$$\tilde u(x,y)=\frac{1}{\pi} \int_\R\frac{yu(\xi)}{y^2+(x-\xi)^2}d\xi,\quad (x,y)\in \R^2_+.$$
Let $I=(-1,1)$ and for a given $g\in \dot H^{\frac12}(\R, \sph^1)$ set
$$\mathcal{E}_g= \{u\in \dot H^{\frac12}(\R, \sph^1):u=g\text{ in }\R\setminus I\}.$$
A minimizer $u_0$ of $E$ in $\mathcal{E}_g$ exists by direct methods and is a half-harmonic map in $I$ (equivalently, a critical point of $E$ in $\mathcal{E}_g$, see e.g. \cite{milsir}), namely it solves
$$(-\Delta)^\frac12 u(x):=-\frac{\partial \tilde u(x,0)}{\partial y}\perp T_{u(x)}\sph^1,$$
hence it is smooth in $I$ by a result of Da Lio and Rivi\`ere \cite{DLR} (see also \cite{MazSch0,MazSch2,milpeg,milpegsch,milsir, Sch2} for more recent approaches). In our previous work \cite{HM} we studied its non-uniqueness, namely the existence of other half-harmonic maps in $\mathcal{E}_g$, in analogy with similar results of Brezis-Coron \cite{bc} and Jost \cite{jost} regarding harmonic maps from the disk $D^2$ into the sphere $\sph^2$. In particular we proved:

\begin{thmx}[\cite{HM}]\label{thmx1} Let $u_0$ be a minimizer of $E$ in $\mathcal{E}_g$ for some $g\in \dot H^{\frac12}(\R,\sph^1)$. If $g$ is non-constant in $\R\setminus I$, there exists a second critical point $\bar u\ne u_0$ of $E$ in $\mathcal{E}_g$.
\end{thmx}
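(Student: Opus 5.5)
The plan is to follow the Brezis--Coron/Jost strategy: a mountain-pass argument in the affine space $\mathcal{E}_g$, with the lack of compactness controlled by the quantized energy of half-harmonic bubbles. The classification of half-harmonic maps $\R\to\sph^1$ says that every nonconstant finite-energy one is the trace of a finite Blaschke product, with energy $2\pi|d|$. So every bubble costs at least $2\pi$.

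\emph{Step 1 (mountain-pass geometry).} First I would show that $u_0$ is a strict local minimizer, or else that there is a continuum of minimizers. In the latter case a second critical point already exists and we are done. Assume therefore that $u_0$ is isolated. I would then build a competing path. Lifting, write $u=u_0 e^{i\varphi}$ with $\varphi\in \dot H^{1/2}$ supported in $\bar I$. Then glue in a concentrated degree-$\pm1$ bubble $b_\ve$ at a point $x_0\in I$:
\[
u_t = u_0\cdot b_{\ve(t)}(\cdot - x_0),
\]
where $b_\ve$ is a M\"obius profile of scale $\ve$, corrected to equal $1$ outside a small interval. As $\ve\to0$ this tends to $u_0$ in energy plus $2\pi$. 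As $\ve$ grows, the map becomes one whose relative topological class in $I$ differs from that of $u_0$.

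The key estimate, analogous to Brezis--Coron, is
\[
E(u_0 b_\ve)<E(u_0)+2\pi \quad\text{for suitable }\ve,
\]
and the hypothesis that $g$ is nonconstant outside $I$ is exactly what makes this possible. I expect the cross term to be $-c\,\ve\,\langle (-\Delta)^{1/2}u_0,\cdot\rangle$-type: an interaction between the bubble and the nontrivial ``tension'' of $u_0$, with a strictly negative sign after choosing the orientation $\pm1$ and the point $x_0$. If $g$ were constant, $u_0$ would be constant and this term would vanish.

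\emph{Step 2 (mountain pass and compactness).} Let $\Gamma$ be the class of paths in $\mathcal{E}_g$ joining $u_0$ to a map of the other relative class (or joining $u_0$ to $u_0 b_{\ve_1}$ with $E<E(u_0)$ level after rescaling). Define $c=\inf_\Gamma\max E$. By Step 1 we have $c<E(u_0)+2\pi$, and by strict local minimality $c>E(u_0)$. I would then run a Palais--Smale analysis, most easily via Struwe-type perturbation: replace $E$ by a perturbed functional, or use the $\alpha$-regularization $E_p$ with $p\downarrow$ the critical exponent, or use a Sacks--Uhlenbeck-type fractional approximation. Critical points of the approximation converge away from finitely many points to a half-harmonic map $\bar u$ in $\mathcal{E}_g$. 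Any energy lost is a sum of bubbles, each of energy at least $2\pi$, by the quantization above. Since $c<E(u_0)+2\pi$, either there is no bubbling and $\bar u$ is a critical point at level $c>E(u_0)$, so $\bar u\neq u_0$, or the energy identity forces a contradiction.

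The main obstacle is the strict inequality in Step 1. This requires an expansion of the nonlocal energy of $u_0b_\ve$ that captures a first-order negative interaction term. My first attempt would be to work with harmonic extensions in $\R^2_+$. There $u_0b_\ve$ has extension close to $\tilde u_0\cdot\tilde b_\ve$, and the Dirichlet integral splits as
\[
\int|\nabla\tilde u_0|^2+\int|\nabla\tilde b_\ve|^2+2\int \mathrm{Re}\,(\nabla\tilde u_0\,\bar{\tilde u}_0)\cdot(\nabla\tilde b_\ve\,\bar{\tilde b}_\ve)+\text{errors}.
\]
The cross term is, to leading order, the pairing of the angular current $\nabla\theta_0$ of $u_0$ near $x_0$ with the bubble's current. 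Nonconstancy of $g$, via unique continuation for the half-harmonic equation, guarantees $\nabla\theta_0\ne0$ somewhere, which lets one choose $x_0$ and the orientation so that this term is negative and dominates the $O(\ve^2)$ cutoff errors.
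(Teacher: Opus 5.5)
\paragraph{Verdict.} Your Step~1 is the right key estimate, but your Step~2 (the mountain pass) cannot work: the path class it needs is empty.

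\paragraph{What Step~1 gets right.} Step~1 is exactly Lemma~\ref{l2}. Since $g$ is non-constant outside $I$, the minimizer $u_0$ is smooth and non-constant in $I$, so there is $x_0\in I$ with $u_0'(x_0)\neq0$. Half-harmonicity gives \eqref{perp}. This produces a map in $\mathcal{E}_{g,d_0-s}$, with $s:=\operatorname{sign}(u_0(x_0)\wedge u_0'(x_0))$, whose energy is $<E(u_0)+2\pi$.

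\paragraph{Why Step~2 fails.} The degree is continuous under strong $\dot H^{\frac12}$ convergence: by \eqref{defdeg1} it is a continuous quadratic expression in $(-\Delta)^{\frac14}u$. It is also integer-valued. Hence $\mathcal{E}_g$ is the disjoint union of the classes $\mathcal{E}_{g,d}$, each open and closed.
\begin{itemize}
\item There is no continuous path in $\mathcal{E}_g$ from $u_0$ to a map of a different degree. Your class $\Gamma$ is empty and $c=+\infty$.
\item The family $\ve\mapsto u_0b_\ve$ is not such a path. For every $\ve>0$ it already lies in $\mathcal{E}_{g,d_0\pm1}$. As $\ve\to0$ it converges to $u_0$ only weakly, with an energy gap of $2\pi$.
\item None of the regularizations you list (Struwe perturbation, $\alpha$- or Sacks--Uhlenbeck-type approximation) removes this, because on each approximating space the degree is still continuous and integer-valued.
\item Even where admissible paths exist, Step~1 bounds the energy of one map, not the maximum along a path. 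Along your bubbling family the energy tends to $E(u_0)+2\pi$ as $\ve\to0$, so $c<E(u_0)+2\pi$ would not follow.
\item The energy identity you invoke for approximate fractional solutions with exterior data, including possible concentration at $\pm1$, is asserted rather than proved.
\end{itemize}

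\paragraph{The missing idea.} Use the disconnectedness instead of fighting it, as the paper does (see also the proof of Theorem~\ref{exist3}): minimize $E$ directly on $\mathcal{E}_{g,d_0-s}$.
\begin{itemize}
\item Step~1 gives $m:=\inf_{\mathcal{E}_{g,d_0-s}}E<E(u_0)+2\pi$.
\item Suppose a minimizing sequence converges weakly to some $u$ with $\deg(u)=\ell\neq d_0-s$. Proposition~\ref{p:jump} gives $E(u)\le m-2\pi|d_0-s-\ell|<E(u_0)$. This is impossible, since $u\in\mathcal{E}_g$ and $u_0$ minimizes $E$ on all of $\mathcal{E}_g$.
\item So the weak limit stays in $\mathcal{E}_{g,d_0-s}$ and minimizes there by weak lower semicontinuity.
\item Compactly supported variations in $I$ do not change the degree, so the limit is half-harmonic in $I$.
\item It differs from $u_0$ because its degree does.
\end{itemize}
The degree-jump inequality holds for arbitrary weakly convergent sequences, not only Palais--Smale ones. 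It therefore replaces your whole Palais--Smale and bubbling analysis. Neither isolatedness of $u_0$ nor any mountain-pass geometry is needed.
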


The idea behind Theorem \ref{thmx1} is to minimize $E$ in a homotopy class different from the one of $u_0$. More precisely define $\deg(u)$ for $u\in \dot H^{\frac12}(\R, \sph^1)$, as $\deg(u\circ\pi)$, where $\pi:\sph^1\setminus\{i\}\to \R$ is the stereographic projection {from the North pole}.
{An explicit definition is given by
\begin{equation}\label{defdeg1}
\deg(u)=\frac{1}{2\pi i}\int_\R \mathcal{H}[(-\Delta)^\frac14 \bar u] (-\Delta)^\frac14 u dx,\quad \text{for }u\in \dot H^{\frac12}(\R, \sph^1),\end{equation}
where $\mathcal{H}$ is the Hilbert transform, see \cite[Section 2]{HM}.}

Now define
$$\M{E}_{g,d}=\{u\in \dot H^{\frac12}(\R,\sph^1): u=g \text{ in }{\R\setminus I},\, \deg(u)=d\}.$$
Then Theorem \ref{thmx1} follows from proving the existence of minimizers of $E$ restricted to ${\M{E}_{g,d_0+ 1}}$ or to ${\M{E}_{g,d_0- 1}}$ (depending on the orientation of $u_0$), where $d_0=\deg(u_0)$.

\subsection{Constancy and non-constancy results}

The condition $g\not\equiv const$ in $\R\setminus I$ in Theorem \ref{thmx1} turns out to be necessary as the authors proved, for a general target closed manifold:

\begin{thmx}[\cite{HM}]\label{thmx2} Let  $\mathcal{N}$ be a smooth closed manifold isometrically embedded in $\R^N$.  Let $$u\in \dot H^{\frac12}(\R,\mathcal{N}):=\left\{u\in \dot H^{\frac12}(\R, \R^N): u(x)\in \mathcal{N} \text{ for a.e. }x\in\R\right\},$$
be half-harmonic in $I$, i.e.
\begin{equation}\label{eq12harmN}
(-\Delta)^\frac{1}{2} u(x):=-\frac{\de \tilde u(x,0)}{\de y} \perp T_{u(x)} \mathcal{N} \quad \text{for }x\in I,
\end{equation}
where $\tilde u:\R^2_+\to \R^N$ is the Poisson harmonic extension. If $u(x)\equiv P$ in $\R\setminus I$ for some $P\in \mathcal{N}$    then $u\equiv P$ in $\R$.
\end{thmx}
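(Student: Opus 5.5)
The plan is to use the Hopf differential of the harmonic extension. Let $\tilde u$ be the Poisson extension of $u$ to $\R^2_+$, with $z=x+iy$, and set
$$h(z):=|\tilde u_x|^2-|\tilde u_y|^2-2i\,\tilde u_x\cdot\tilde u_y .$$
Since $\tilde u$ is harmonic, $h$ is holomorphic in $\R^2_+$. Moreover $|h|\le|\nabla\tilde u|^2\in L^1(\R^2_+)$, because $E(u)<\infty$ by \eqref{12energybis}. I would show that $h\equiv0$, and then use the exterior condition together with unique continuation to conclude that $u\equiv P$.

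\emph{Step 1: boundary behaviour and reflection.} On $\R\setminus[-1,1]$ the trace $u\equiv P$ is constant. Hence $\tilde u$ is smooth up to the boundary there, $\tilde u_x=0$, and $\operatorname{Im}h=0$. On $I$ the map $u$ is smooth by the regularity theory of Da Lio--Rivi\`ere cited above, and I would take for granted that $\tilde u$ is $C^1$ up to $I$. There $\tilde u_x(x,0)=u'(x)\in T_{u(x)}\mathcal N$ and $-\tilde u_y(x,0)\perp T_{u(x)}\mathcal N$ by \eqref{eq12harmN}, so again $\tilde u_x\cdot\tilde u_y=0$ and $\operatorname{Im}h=0$. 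Therefore $h$ is continuous up to $\R\setminus\{\pm1\}$ with real boundary values. By the Schwarz reflection principle, $h$ extends to a holomorphic function on $\mathbb C\setminus\{\pm1\}$ satisfying $h(\bar z)=\overline{h(z)}$. The extension is still in $L^1$ near $\pm1$.

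\emph{Step 2: classification of $h$.} A holomorphic function that is $L^1$ in a punctured disc has at most a simple pole. Indeed, the Laurent coefficients $c_{-k}$ with $k\ge2$ vanish: average $h(z)(z\mp1)^{k-1}$ over circles, use that $\int_0^\varepsilon r\int|h|\,d\theta\,dr<\infty$ to get $\liminf_{r\to0} r\int_0^{2\pi}|h(re^{i\theta})|\,d\theta=0$, and note that $r^{k}\le r$ for $r<1$. At infinity, $\tilde u-P$ is the Poisson integral of the compactly supported bounded function $u-P$, so $\nabla\tilde u=O(|z|^{-2})$ and $h=O(|z|^{-4})$. By Liouville's theorem,
$$h(z)=\frac{a}{z-1}+\frac{b}{z+1},$$
with $a,b$ real by the reflection symmetry. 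The decay $O(|z|^{-4})$ first forces $a+b=0$. Then $h=2a/(z^2-1)\sim 2a z^{-2}$ forces $a=0$. Hence $h\equiv0$.

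\emph{Step 3: unique continuation.} On $J=(1,\infty)$ we have $\tilde u_x=0$ and $h=-|\tilde u_y|^2=0$, so $\tilde u_y(\cdot,0)=0$ on $J$ as well. Let $w=\tilde u-P$, which is harmonic with $w=0$ and $\partial_yw=0$ on $J$. Extending $w$ by odd reflection across $J$ gives a harmonic function that is $C^1$ across $J$ and vanishes to first order there. Cauchy uniqueness, or equivalently the analyticity of the reflected harmonic function, then gives $w\equiv0$. Hence $\tilde u\equiv P$ and $u\equiv P$ in $\R$.

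The main obstacle is the justification in Step 2 near $\pm1$. There $u$ may be singular, and the only information available is the integrability $|\nabla\tilde u|^2\in L^1$. I expect the $L^1$-implies-simple-pole argument to handle this cleanly. Still, one must make sure the reflected $h$ is genuinely holomorphic across $\R\setminus\{\pm1\}$. For this I would rely on the boundary regularity of $\tilde u$ on compact subsets of $I$ and of $\R\setminus[-1,1]$, which follows from the smoothness of $u$ there.
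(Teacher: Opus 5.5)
Your proposal follows the same route as the paper's proof, which is sketched in the introduction: reflect the Hopf differential across $\R\setminus\{\pm1\}$, get at most simple poles at $\pm1$ from integrability, make it vanish at infinity, then use unique continuation. The only difference is at infinity. The paper uses just integrability of $\tilde H$ near $\infty$ together with Cauchy's formula; note that $c/(z^2-1)\sim c z^{-2}$ is not integrable at infinity in the plane. You instead use the pointwise decay $h=O(|z|^{-4})$ from the Poisson representation of $u-P$. Both work.

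There is one incorrect step in Step 2. From $\int_0^\varepsilon\bigl(r\int_0^{2\pi}|h|\,d\theta\bigr)\,dr<\infty$ you cannot conclude that $\liminf_{r\to0} r\int_0^{2\pi}|h(re^{i\theta})|\,d\theta=0$. An integrable function of $r$ need not have zero $\liminf$ at $0$. Indeed $h=1/z$ lies in $L^1$ near $0$ while $r\int_0^{2\pi}|h|\,d\theta\equiv 2\pi$. If your claim were true, it would also kill the residue $c_{-1}$, so there could be no poles at all.

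What integrability actually gives is $\liminf_{r\to0} r^2\int_0^{2\pi}|h|\,d\theta=0$. Otherwise $r\int_0^{2\pi}|h|\,d\theta\ge c/r$ near $0$, which is not integrable. Combined with
$|c_{-k}|\le \frac{r^k}{2\pi}\int_0^{2\pi}|h|\,d\theta\le \frac{r^2}{2\pi}\int_0^{2\pi}|h|\,d\theta$ for $k\ge2$ and $r<1$, this yields $c_{-k}=0$ for $k\ge 2$. That is exactly the simple-pole statement you use afterwards, so with this one-line correction the argument is complete.
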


This result is the fractional counterpart of the following Lemaire's constancy theorem:

\begin{thmx}[\cite{lem}]\label{thmx3} Given a smooth contractible surface $M$ with boundary and $\mathcal{N}$ a smooth Riemannian manifold without boundary, any harmonic map $u:M\to \mathcal{N}$ such that $u\equiv P$ on $\partial M$ for some $P\in \mathcal{N}$ is constant.
\end{thmx}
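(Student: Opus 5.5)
The plan is to follow Lemaire's original argument via the Hopf differential. First I would reduce to the case where $M$ is the closed unit disk $\cl{D^2}$. By the uniformization theorem, a smooth contractible compact surface with boundary is diffeomorphic to $\cl{D^2}$, and conformally equivalent to it. The Dirichlet energy in two dimensions is conformally invariant, so harmonicity is preserved under this change of domain. It therefore suffices to treat a harmonic map $u:\cl{D^2}\to\mathcal{N}$ with $u\equiv P$ on $\de D^2$, smooth up to the boundary by standard boundary regularity for harmonic maps with smooth (here constant) Dirichlet data.

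Next I would consider the Hopf differential. In the complex coordinate $z=x+iy$, set
$$\varphi(z):=\left\langle \frac{\de u}{\de z},\frac{\de u}{\de z}\right\rangle=\frac14\left(|u_x|^2-|u_y|^2-2i\,\langle u_x,u_y\rangle\right),$$
with the inner product taken in $\R^N$. The harmonic map equation says that $\Delta u\perp T_u\mathcal{N}$. Since $u_{\bar z}$ is tangent, we get
$$\de_{\bar z}\varphi=2\langle u_{zz\bar z},u_z\rangle\ldots,$$
or more precisely $\de_{\bar z}\varphi = 2\langle u_{z\bar z},u_z\rangle = \tfrac12\langle\Delta u,u_z\rangle = 0$. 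Hence $\varphi$ is holomorphic in $D^2$ and continuous up to $\de D^2$.

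The boundary condition then forces $\varphi$ to vanish. On $\de D^2$ the map is constant, so the tangential derivative satisfies $\de_\theta u=0$. In polar form, $z^2\varphi=\frac14\left(|r u_r|^2-|u_\theta|^2-2i\, r\langle u_r,u_\theta\rangle\right)$. On $|z|=1$ this gives $\mathrm{Im}(z^2\varphi)=0$. The function $z^2\varphi$ is holomorphic in $D^2$ and its imaginary part is harmonic and vanishes on the boundary. By the maximum principle $\mathrm{Im}(z^2\varphi)\equiv0$, so $z^2\varphi$ is a real constant. Since it vanishes at $z=0$, we conclude $z^2\varphi\equiv0$, and hence $\varphi\equiv0$. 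This means $u$ is weakly conformal: $|u_x|=|u_y|$ and $\langle u_x,u_y\rangle=0$. In particular $|\nabla u|^2=2|u_\theta|^2/r^2+\ldots$; precisely, $|u_r|=|u_\theta|/r$.

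The step I expect to be the main obstacle is concluding constancy from conformality together with $u_\theta=0$ on the boundary, which yields $u_r=0$ on $\de D^2$. Here I would argue via the energy. For a harmonic map with constant boundary value, a Pohozaev-type identity obtained by testing the equation against $x\cdot\nabla u$ gives
$$\int_{\de D^2}\left(|u_r|^2-|u_\theta|^2\right)=0,$$
which yields nothing new. Instead I would use unique continuation: $u$ is harmonic, with $u=P$ and $\de_\nu u=0$ on $\de D^2$. Extending $u$ by $P$ outside the disk gives a $C^1$ weak solution of the harmonic map system on a larger domain, since the system is semilinear with quadratic gradient nonlinearity. Aronszajn-type unique continuation for $|\Delta u|\le C|\nabla u|$ then implies $u\equiv P$. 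To avoid technicalities I would localize in a chart of $\mathcal{N}$ near $P$, where the system reads $\Delta u^k+\Gamma^k_{ij}(u)\nabla u^i\cdot\nabla u^j=0$; this is valid on a neighborhood of the boundary where $u$ stays close to $P$. I would then propagate the set $\{u\equiv P\}$ inward by connectedness.
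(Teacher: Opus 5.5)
Your proposal is correct and follows essentially the same route as the paper's sketch: conformal reduction to $D^2$, holomorphy of the Hopf differential, reality of $z^2H$ on $\partial D^2$ forcing it to be a real constant that vanishes at $0$, hence conformality and $u_r=0$ on $\partial D^2$, followed by unique continuation. The only minor difference is the last step: the paper shows inductively that $\nabla^k u=0$ on $\partial D^2$ and then cites Sampson's unique continuation, whereas you extend $u$ by $P$ across $\partial D^2$ (legitimate, since the Cauchy data $u=P$, $\partial_\nu u=0$ make the extension a weak solution) and apply Aronszajn's theorem on a larger disk, which works equally well.
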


The crucial step in the proofs of both Theorems \ref{thmx2} and \ref{thmx3} is proving conformality: that of the Poisson harmonic extension $\tilde u$ in Theorem \ref{thmx2}, and that of the map $u$ itself in Theorem \ref{thmx3}. To emphasise similarities and differences between the two results, we recall the main ideas behind them.

In the case of Theorem \ref{thmx3}, instead of the original proof, we sketch an approach based on the Hopf differential\footnote{{while we do not have a precise reference for this approach, we think that it is based on widely known ideas.}}. Up to a conformal diffeomorphism, consider $M=D^2=\{z\in \mathbb{C}:|z|\le 1\}$ with coordinates $z=x+iy$ and set 
\begin{equation}\label{defH}
H(z)=\left(|u_x|^2_\mathcal{N}-|u_y|^2_\mathcal{N}\right)-2i\langle u_x, u_y\rangle_\mathcal{N},
\end{equation}
which is holomorphic because $u$ is harmonic.
In polar coordinates one computes
$$H_1(z):=z^2H(z) = r^2|u_r|^2_\mathcal{N}-|u_\theta|^2_\mathcal{N} -2ir\langle u_r,u_\theta\rangle_\mathcal{N}.$$
On $\partial D^2$, the condition $u\equiv P$ implies
$u_\theta=0$, hence $H_1$ is real-valued.
Since it is holomorphic in $D^2$, it is also real-valued in $D^2$, hence constant. From
$H_1(0)=0$, we obtain $H_1\equiv0$, consequently $H\equiv0$ and conformality follows. Finally, conformality implies $u_r=0$ on $\partial D^2$. Then, inductively, $\nabla^k u=0$ on $\partial D^2$ for every $k$, and $u\equiv P$ by unique continuation for harmonic maps, see \cite{Sam}.

This argument fails on any annulus $A=\{\rho<|z|<1\}$ ($\rho\in (0,1)$), since the condition $H_1(0)=0$ is essential to rule out the possibility $H_1(z)=z^2H(z)\equiv const\ne 0$.

This obstruction is realized by the maps given in \cite[Example 5.1]{lem}, as we now recall. Identify the annulus conformally with a flat cylinder
$ [-T,T]\times \sph^1,$ parametrised by $(t,\theta)$
and consider maps of the form
\[
        u(t,\theta)=
        \bigl(\sin F(t)\cos \theta,\ \sin F(t)\sin \theta,\ \cos F(t)\bigr)\in \sph^2\subset\mathbb{R}^3.
\]
These are harmonic if and only if they satisfy the equation
\[
        F''=\sin F\cos F ,
\]
which can be integrated to
\[
        (F')^2-\sin^2 F=\lambda^2
\]
for some constant $\lambda$. For any $T>0$, $j\in \mathbb{N}\setminus\{0\}$ one may choose
$\lambda_j>0$ so that the solution satisfies
\begin{equation}\label{condF}
F(-T)=0, \qquad F(T)=2\pi j .
\end{equation}
Then the corresponding function $u_j$ satisfies $u_j\equiv (0,0,1)\in \sph^2$ on the boundary, yet it is non-constant. In this case one can compute $H_1(z)=\lambda^2_j$ and the map is not conformal.

Let us also note that by rotational symmetry and the boundary condition \eqref{condF}, we have $\deg(u_j)=0$ for every $j$, where we see $u_j$ as a map from $\R^2$ into $\sph^2$ constant outside the annulus $A$, and its Brouwer degree is defined lifting $u_j$ to a map from $\sph^2$ into $\sph^2$ via stereographic projection. 
Moreover every $u_j$ is homotopically equivalent to the constant map, relative to their constant boundary condition (the homotopy breaks the rotational symmetry, though).

\medskip

The proof of conformality in Theorem \ref{thmx2} is different and relies on $L^2$-integrability of $\nabla\tilde u$ and on regularity of $u$, hence of $\tilde u$ and of the Hopf differential in $\overline{\mathbb{C}_+}\setminus\{\pm1\}$ rather than on topology. Indeed, one defines ${\tilde H}$ similar to \eqref{defH}:
$$\tilde H(z)=\left(|\partial_x \tilde u|^2-|\partial _y \tilde u|^2\right)-2i\left(\partial _x \tilde u\cdot \partial_y \tilde u\right).$$
This is holomorphic and integrable on $\overline{\mathbb{C}_+}\setminus\{\pm 1\}$, real-valued on $\partial \mathbb{C}_+\setminus\{\pm1\}$ and therefore can be Schwarz-reflected ($\tilde H(\overline{z})=\overline{\tilde H(z)}$) to a holomorphic map,  still denoted by $\tilde H$, on $\mathbb{C}\setminus\{\pm1\}$ (using the regularity of half-harmonic maps inside $I$, but \emph{not} on $\partial I$, see \cite{DLR2}).
The integrability of $\tilde H$ near $\pm1$ implies that it has at most simple poles at $\pm 1$, so it can be written as $\tilde H(z)=\frac{\tilde H_1(z)}{(z-1)(z+1)}$ with $\tilde H_1$ holomorphic on $\mathbb{C}$. The integrability of $\tilde H$ at infinity and Cauchy's formula then imply that $\tilde H_1'\equiv 0$ and eventually $\tilde H_1\equiv 0$, so that $\tilde u$ is conformal, hence, as before, constant by unique continuation.

This proof breaks if we replace $I=(-1,1)$ with $I\cup J=(a,b)\cup (c,d)$, where $a<b<c<d$, since in this case the integrability of $\tilde H(z)=\frac{\tilde H_1(z)}{(z-a)(z-b)(z-c)(z-d)}$ does not imply the vanishing of $\tilde H_1$ (take e.g. $\tilde H_1=const\ne 0$). This raises the question whether on the union of two disjoint intervals there exists non-constant half-harmonic maps which are constant outside. This is exactly what we are going to prove.

\begin{thm}\label{exnonconst} For every {$k\in \mathbb{N}\setminus\{0\}$}, there exists $a_0\in (0,1)$, such that for every $a\in (0,a_0]$ there are $k$ distinct non-constant maps $u_1,\dots, u_k\in \dot H^{\frac12}(\R,\sph^1)$ which are half-harmonic in $I_-\cup I_+:=(-1,-a)\cup (a,1)$  and equal to $1 \in \sph^1\subset\mathbb{C}$ in $\R\setminus (I_-\cup I_+)$.
 \end{thm}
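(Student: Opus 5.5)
Our plan is to find the $u_j$ as minimizers of $E$ in suitable homotopy classes. Because the admissible maps are constant on $[-a,a]$ and outside $(-1,1)$, a class can be specified by a \emph{pair} of degrees. For $u\in\dot H^{\frac12}(\R,\sph^1)$ with $u\equiv1$ on $\R\setminus(I_-\cup I_+)$, the restrictions $u|_{I_-}$ and $u|_{I_+}$ extended by $1$ lie in $\dot H^{\frac12}(\R,\sph^1)$. Let $d_\pm(u)$ be their degrees in the sense of \eqref{defdeg1}; these are integers, continuous in $\dot H^{\frac12}$. In the class
$$\M{F}_{d}:=\{u: u\equiv 1 \text{ on } \R\setminus(I_-\cup I_+),\ d_-(u)=d,\ d_+(u)=-d\}$$
the total degree is $0$, which is consistent with Theorem \ref{thmx2}: a single interval carrying nonzero degree with constant data is impossible in a minimizing class, so the two intervals must have opposite degrees. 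Set $m_d(a):=\inf_{\M{F}_d}E$. We would show that for each $d=1,\dots,k$ the infimum is attained when $a$ is small. Since the classes are open and closed in the constraint set, a minimizer is a critical point, hence half-harmonic in $I_-\cup I_+$. It is non-constant because $d\neq0$, and the maps for different $d$ are distinct because their degree pairs differ.

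\emph{Upper bound.} We build a competitor in $\M{F}_d$ explicitly. On $I_+$ we place a degree $-d$ configuration of the form $\prod_{j}\overline{B_{\lambda}(x-x_j)}$, made of $d$ Möbius-type bubbles concentrated near $x=a$ at scale comparable to $a$. On $I_-$ we place the reflected configuration of degree $d$. The bubbles are glued to $1$ through cutoffs in $\dot H^{\frac12}$. A bubble and an antibubble sitting at distance $\sim a$ can be nearly cancelled: the map $z\mapsto \frac{z-p}{z-\bar p}\cdot\overline{\frac{z-q}{z-\bar q}}$ with $p,q$ close has small energy. So we expect, for fixed $d$,
$$m_d(a)\le 2\pi\cdot 2d-\delta_d+o(1)\quad\text{as }a\to0, \qquad\text{and in fact } m_d(a)\to 0 \text{ as } a\to 0.$$
The latter holds because letting the two intervals touch lets the degree-$d$ and degree-$(-d)$ parts annihilate across the point $0$, at the cost of a cutoff which is $O(1/|\log a|)$ in $\dot H^{\frac12}$ (log-capacity of a point in 1D for the $H^{1/2}$ norm).

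\emph{Compactness.} We take a minimizing sequence in $\M{F}_d$. Weak limits may lose degree only through bubbling, i.e.\ concentration of energy at points of $\cl{I_-}\cup\cl{I_+}$. Each bubble costs at least $2\pi$, by the sharp inequality $E(u)\ge 2\pi|\deg u|$. Degree can also escape through the endpoints $\pm a$, $\pm1$. We plan the usual Brezis--Coron/Jost-type argument, as in the proof of Theorem \ref{thmx1} in \cite{HM}: if the sequence loses a bubble, then
$$\liminf E(u_n)\ge m_{d'}(a)+2\pi \quad\text{for some } d'\neq d \text{ with } |d-d'|\ \text{accounting for the lost degree}.$$
This contradicts the strict inequality $m_d(a)<\min_{d'}\bigl(m_{d'}(a)+2\pi|d-d'|\bigr)$, which we derive from the upper bound once $a\le a_0(k)$. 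More precisely, we need $m_d(a)<2\pi$ for all $d\le k$. The explicit annihilation construction gives $m_d(a)\lesssim d^2/|\log a|$, which is below $2\pi$ for $a$ small depending on $k$. Any weak limit then has energy below $2\pi$, and the degree jumps can only be by multiples carrying energy at least $2\pi$. Hence no degree is lost, and the limit stays in $\M{F}_d$.

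\emph{Main obstacle.} The delicate step is the degree-jump estimate. Concentration at the shared boundary points $\pm a$ (or the endpoints $\pm 1$), where the exterior data are only constant, must still cost at least $2\pi$ per unit of degree lost from one interval. This requires a localized degree on each interval and a local version of $E\ge 2\pi|\deg|$ near a boundary point. We would first try to prove it by reflecting the harmonic extension and using the winding number on small half-circles around the concentration point, in the spirit of the relative degree mentioned in the abstract.
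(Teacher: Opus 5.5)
Your architecture matches the paper's. You minimize $E$ in the class with local degrees $(d,-d)$ on $(I_-,I_+)$. You push the infimum below $2\pi$ with a radial phase winding $2\pi d$ between the scales $a$ and $1$. Then a local degree-jump inequality together with $E\ge 0$ should exclude any loss of degree. Your upper bound is correct, and it gives a better $a_0$ than the paper's $\log\log$ profile. Take $\varphi(x)=2\pi d\log(1/|x|)/\log(1/a)$ for $a<|x|<1$, $\varphi=2\pi d$ for $|x|\le a$, $\varphi=0$ for $|x|\ge 1$, and let $\Phi$ be the same formula in $|z|$ on $\R^2_+$. Then $e^{i\Phi}$ extends $e^{i\varphi}$, so $E(e^{i\varphi})\le\int_{\R^2_+}|\nabla\Phi|^2=4\pi^3d^2/\log(1/a)$. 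This is $<2\pi$ as soon as $\log(1/a)>2\pi^2k^2$. You should write this construction out explicitly; the bubble/antibubble heuristic is not needed.

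The genuine gap is the step you yourself flag as the main obstacle. The inequality
\[
E(u)\le\liminf_{n\to\infty}E(u_n)-2\pi\bigl(|k_+-\ell_+|+|k_--\ell_-|\bigr),
\]
for $u_n\rightharpoonup u$ with local degrees $(k_-,k_+)$ and $(\ell_-,\ell_+)$ (Proposition \ref{ldegjump}), is the whole compactness argument, and you only outline a programme for it. That programme would need a concentration--compactness theory for a nonlocal energy. It requires three things, none of which you carry out:
\begin{itemize}
\item that local degree is lost only at atoms of the defect measure, which you assert without proof;
\item a Courant--Lebesgue choice of half-circles on which $|\tilde u_n|$ stays close to $1$;
\item a Jacobian lower bound on half-disks whose boundary values are only approximately $\sph^1$-valued, including at the endpoints $\pm a,\pm1$.
\end{itemize}
The missing idea that makes all of this unnecessary is that $I_-$ and $I_+$ share no endpoints: they are at distance $2a>0$. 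Hence the restriction operators $T_\pm$ are bounded on $H^{\frac12}$ functions supported in $\overline{I_-\cup I_+}$ (Lemma \ref{Tpm}). Set $h_{n,\pm}:=T_\pm(u_n-1)$ and $u_{n,\pm}:=1+h_{n,\pm}$. These are $\sph^1$-valued maps whose \emph{global} degree equals the local degree $k_\pm$. Apply the global jump estimate of Proposition \ref{p:jump} to $(u_{n,+})$ and $(u_{n,-})$ separately. Then use the exact identity $E(u_n)=E(u_{n,+})+E(u_{n,-})+2\langle h_{n,+},h_{n,-}\rangle_{\dot H^{\frac12}}$. The cross term equals $-\frac1\pi\int_{I_+}\int_{I_-}\frac{h_{n,+}(x)\cdot h_{n,-}(y)}{|x-y|^2}\,dx\,dy$. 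Its kernel is bounded on $I_+\times I_-$, so it converges by the compact embedding into $L^2$. This gives the local jump estimate without ever locating where the energy concentrates.
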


The difficulty is that, contrary to Lemaire's example, we cannot reduce the half-harmonic map equation to an ODE and we do not have explicit examples. Here, instead, we will  use a minimization procedure, but in \emph{local} homotopy classes, i.e. prescribing $\deg(u_j, I_\pm)= \pm j$. Then, similar to Lemaire's functions, we have $\deg(u_j)=0$ as a global degree of maps from $\R$ into $\sph^1$ constant outside  $I_\pm$, for $1\le j\le k$, but no $u_j$ can be deformed to a different $u_{j'}$ or to a constant, relative to the constant boundary condition.

\medskip

Compared to Lemaire's example, where for each annulus $A_{\rho,1}=\{\rho<|z|<1\}$, $\rho\in (0,1)$, there are infinitely many distinct harmonic maps $u:A_{\rho,1}\to \sph^2$ with $u|_{\partial A_{\rho,1}}\equiv (0,0,1)$ we have the following natural open questions:

\begin{pb} {Given $k\in \mathbb{N}\setminus \{0\}$}, does Theorem \ref{exnonconst} hold also for \emph{any} $a_0\in (0,1)$, or only for $a_0$ sufficiently small?
\end{pb} 

\begin{pb} In Theorem \ref{exnonconst}, for a fixed $a\in (0,1)$ are there infinitely many half-harmonic maps or just finitely many?
\end{pb}

\medskip

The methods of Theorem \ref{exnonconst} can be used to prove also a multiplicity result for non-constant boundary data with energy below the critical threshold of $2\pi$.

 \begin{thm}\label{exk} Given {$k\in \mathbb{N}\setminus \{0\}$}  and $\Lambda\in [0,2\pi)$ there exists $a_0>0$, such that for every $a\in (0,a_0]$ and $g\in \dot H^{\frac12}(\R,\sph^1)$ with $E(g)\le\Lambda$  there exists $k+1$ distinct maps $u_0,\dots, u_k\in \dot H^{\frac12}(\R,\sph^1)$ which are half-harmonic in $I_-\cup I_+=(-1,-a)\cup (a,1)$ and equal to $g$ in $\R\setminus (I_-\cup I_+)$. Moreover $E(u_j)<2\pi$ and in particular $\deg(u_j)=0$ for $j=0,\dots,k$.
 \end{thm}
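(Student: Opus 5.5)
We minimize $E$ in \emph{local} degree classes, which is the same strategy as for Theorem \ref{exnonconst} but now with boundary data $g$. First we define a relative degree on each interval. For $u\in\dot H^{\frac12}(\R,\sph^1)$ with $u=g$ outside $I_-\cup I_+$, consider the map $u_\pm$ that equals $u$ on $I_\pm$ and $g$ elsewhere. Both $u_\pm$ lie in $\dot H^{\frac12}(\R,\sph^1)$, and we set $\deg(u,I_\pm):=\deg(u_\pm)-\deg(g)$. Since $\deg$ is continuous (indeed locally constant) under strong $\dot H^{\frac12}$ convergence by \eqref{defdeg1}, this relative degree is well defined and stable under strong convergence. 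The relevant classes are
$$\M{E}^{j}_{g,a}=\{u\in\dot H^{\frac12}(\R,\sph^1): u=g \text{ in }\R\setminus(I_-\cup I_+),\ \deg(u,I_+)=j,\ \deg(u,I_-)=-j\},\quad j=0,\dots,k.$$
The $u_j$ will be the minimizers of $E$ in $\M{E}^j_{g,a}$. They are distinct because their local degrees differ. Each is half-harmonic in $I_-\cup I_+$, because compactly supported variations in $I_\pm$ preserve the local degrees, so a minimizer is a critical point there.

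The core of the proof is existence of minimizers, via an energy bound and a degree-jump estimate.

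\emph{Upper bound.} We construct a competitor in $\M{E}^j_{g,a}$ with $E<2\pi$ once $a$ is small. Take the $j$-th power of a Möbius-type bubble concentrated near $a$ in $I_+$ and its conjugate near $-a$ in $I_-$, both at scale $\ll a$, and glue them to (a modification of) the minimizer of $g$ in the degree-zero class. The product of the two bubbles is a map of degree $0$. As $a\to0$ and the scale shrinks, the dipole of a degree $j$ bubble and its conjugate behaves like a nearly constant map whose $\dot H^{\frac12}$ interaction energy cancels. So we aim for
$$\inf_{\M{E}^j_{g,a}}E\le \Lambda'+o_a(1)$$
with $\Lambda'<2\pi$ depending only on $\Lambda$ and $k$. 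This needs care: the dipole energy must tend to $0$ uniformly for $j\le k$, which is why $a_0$ depends on $k$, and the cost of inserting the dipole into $g$ must be controlled using only $E(g)\le\Lambda$. The latter holds because near $0$ the data is prescribed only at scale $a$.

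\emph{Degree-jump estimate (main obstacle).} For a minimizing sequence $u_n\rightharpoonup u$ weakly, lower semicontinuity gives $E(u)\le\liminf E(u_n)$. Loss of the local degree can only happen through concentration or through escape toward the endpoints $\pm1,\pm a$. We must show that any change of a local degree costs energy at least $2\pi$, i.e.
$$\liminf E(u_n)\ge E(u)+2\pi|\Delta\deg|.$$
The plan is to split $u_n-u$ using Brezis--Lieb type decoupling for the Gagliardo seminorm. Then we use that the defect measure carries nonzero degree, together with the sharp inequality $E(v)\ge 2\pi|\deg v|$; this follows from \eqref{defdeg1}, since $\mathcal H$ is an isometry, via Cauchy--Schwarz. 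The delicate point is localizing the degree of the concentrating part when bubbles form at the boundary points $\pm a,\pm1$, where half of a bubble could sit outside the interval. We would treat this by a blow-up at the concentration point and a rescaled limit that is a half-harmonic-type map of nonzero degree.

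\emph{Conclusion.} Since the energy along a minimizing sequence is $<2\pi$, no degree jump can occur, so the weak limit stays in $\M{E}^j_{g,a}$ and is a minimizer. Finally, $E(u_j)<2\pi$ combined with $E\ge2\pi|\deg|$ forces $\deg(u_j)=0$.
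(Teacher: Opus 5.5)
\textbf{The gap is in your upper bound.} The competitor you describe cannot have energy below $2\pi$. A M\"obius bubble $b(x)=\frac{x-a-i\ve}{x-a+i\ve}$ is the trace of a conformal map, so $E(b^j)=2\pi j$ at every scale $\ve$. If you place $b^j$ at $a$ and its conjugate at $-a$ with width $\ve\ll a$, the two factors decouple and $E(b_+^j\bar b_-^j)\to 4\pi j$ as $\ve/a\to0$. Only the interaction term vanishes; the self-energies add up. In the opposite regime $\ve\gg a$ the factors do nearly cancel, but then the phase of the product stays uniformly within $O(ja/\ve)$ of $0$, so it carries no winding on $I_\pm$.

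The missing idea is a non-conformal, slowly winding phase that exploits the criticality of $\dot H^{\frac12}$ in one dimension. Take a real function $\varphi$ that rises by $2\pi j$ on $(-\rho,-\delta)$ and falls back on $(\delta,\rho)$ along $\log\log(1/|x|)$, with $\rho=\exp(-e^{2\pi m})$ and $\delta=\exp(-e^{2\pi(m+j)})$. Then $[e^{i\varphi}]_{\dot H^{\frac12}}\le[\varphi]_{\dot H^{\frac12}}\to0$ as $m\to\infty$ (Lemma~\ref{lem:loglog-transition}). This is why $a_0=\delta_{m,k}$ must be super-exponentially small in $k$.

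No gluing to a minimizer is needed to insert this into $g$. The map $v_j=e^{i\varphi}$ satisfies $v_j\equiv1$ off $I_-\cup I_+$, so $gv_j=g$ there, and its relative local degrees are $-j$ on $I_+$ and $j$ on $I_-$ (use $\bar v_j$ for your sign convention). Lemma~\ref{product} then gives $\sqrt{E(gv_j)}\le\sqrt{\Lambda}+\sqrt{E(v_j)}<\sqrt{2\pi}$ as soon as $E(v_j)<(\sqrt{2\pi}-\sqrt{\Lambda})^2$.

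\textbf{Your degree-jump step is only a plan.} The difficulty you flag, half-bubbles at $\pm a,\pm1$, is real for a blow-up argument. Also, $u_n-u$ is not $\sph^1$-valued, so $E\ge2\pi|\deg|$ cannot be applied to the defect directly; you would need the bilinear structure of \eqref{defdeg1}, which is how the global estimate, Proposition~\ref{p:jump}, works.

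The paper avoids localizing bubbles altogether:
\begin{itemize}
\item Set $h_{n,\pm}:=T_\pm(u_n-g)$. The restriction operators $T_\pm$ are bounded on $X$ by Lemma~\ref{Tpm}, because $\operatorname{dist}(I_+,I_-)=2a>0$.
\item The maps $u_{n,\pm}=g+h_{n,\pm}$ are exactly your $u_\pm$. They converge weakly and have global degree $\deg(g)+k_\pm$, so Proposition~\ref{p:jump} applies to each.
\item Finally, $E(u_n)=E(u_{n,+})+E(u_{n,-})-E(g)+2\langle h_{n,+},h_{n,-}\rangle_{\dot H^{\frac12}}$. The cross term converges because its kernel $|x-y|^{-2}$ is bounded on $I_+\times I_-$ and $h_n\to h$ strongly in $L^2$.
\end{itemize}
With these two ingredients, the rest of your argument goes through as you wrote it: no jump is affordable below $2\pi$, the minimizers are critical, they are distinct, and $\deg(u_j)=0$.
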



Again Theorem \ref{exk} will be proven by minimizing with prescribed local degree. This will be defined as a degree relative to $g$, using the multiplicative structure of $\sph^1\subset\mathbb{C}$.

\subsection{Multiplicity results on a single interval}

Let us now consider half-harmonic maps on $I=(-1,1)$, as in \cite{HM}.
The problem of which homotopy classes $\M{E}_{g,d}$ contain a minimizer of $E$ is still largely open. In the case of harmonic maps from the disk into $\sph^2$, while Brezis and Coron \cite{bc} proved that there exists a boundary datum $g:\partial D^2\to \sph^2$ (any parallel of $\sph^2$ parametrised with constant speed) for which the Dirichlet energy can be minimised in only $2$ different homotopy classes,  Kuwert gave a complete characterisation of which homotopy classes admit a minimiser.

\begin{thmx}[Kuwert \cite{kuw}]\label{thmkuw} Consider $g\in C^\infty(\partial D^2,\sph^2)$ and
$$\tilde{\mathcal{E}}_{g,d}:=\{u\in H^{1}(D^2,\sph^2):u|_{\partial D^2}=g,\deg(u,u_0)=d\},$$
where $u_0$ is a fixed extension of $g$ {and $\deg(u,u_0)$ denotes the degree of $u$ relative to $u_0$, suitably defined}. If $g$ has a conformal extension $u^+$ to $D^2$, call $d^+:=\deg(u^+,u_0)$, otherwise set $d^+:=+\infty$. If $g$ has an anticonformal extension $u^-$ to $D^2$, call $d^-:=\deg(u^-,u_0)$, otherwise set $d^-:=-\infty$. Then $E$ has a minimizer in $\tilde{\mathcal{E}}_{g,d}$ if and only if $d^-\le d\le d^+$.
\end{thmx}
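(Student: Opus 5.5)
The plan is to run the direct method inside each class $\tilde{\mathcal{E}}_{g,d}$, use bubbling analysis to locate exactly where compactness fails, and compare energies through the oriented-area functional. Write $E(u)=\frac12\int_{D^2}|\nabla u|^2$ and $V(u)=\int_{D^2}u\cdot(u_x\times u_y)$. The pointwise inequality $\frac12|\nabla u|^2\ge \pm\, u\cdot(u_x\times u_y)$ holds, with equality iff $u$ is conformal (resp. anticonformal). The relative degree is normalised so that $V(u)=V(u_0)+4\pi d$ for $u\in\tilde{\mathcal{E}}_{g,d}$. This gives the lower bound
\begin{equation}
E(u)\ge |V(u_0)+4\pi d| \quad\text{for } u\in\tilde{\mathcal{E}}_{g,d}.
\end{equation}
Equality forces $u$ to be conformal or anticonformal. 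Set $m_d:=\inf_{\tilde{\mathcal{E}}_{g,d}}E$.

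\textbf{Step 1: compactness dichotomy.} Take a minimizing sequence in $\tilde{\mathcal{E}}_{g,d}$. By Sacks--Uhlenbeck/Struwe-type bubbling, with the boundary datum fixed and smooth so that no energy escapes at $\partial D^2$, it converges weakly to some $u\in\tilde{\mathcal{E}}_{g,d'}$. The defect in degree and energy is carried by finitely many harmonic spheres, each costing at least $4\pi$ times the absolute value of its degree. Hence
\begin{equation}
m_d\ge m_{d'}+4\pi|d-d'| .
\end{equation}
Consequently $m_d$ is attained as soon as $m_d<m_{d'}+4\pi|d-d'|$ for every $d'\ne d$.

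\textbf{Step 2: sufficiency, i.e.\ existence for $d^-\le d\le d^+$.} I would start from the absolute minimizer, which lies in some class $d_0\in[d^-,d^+]$ (the lower bound above shows this). I would then argue inductively upwards (and symmetrically downwards), following Brezis--Coron and Jost. Let $u_d$ be a minimizer with $d<d^+$. If $u_d$ is not conformal, then at a point where $u_d$ fails to be conformal one glues in a small, suitably rotated degree-one bubble. The cross term in the energy expansion has a favourable sign, which gives $m_{d+1}<m_d+4\pi$. Combining this with the lower bounds for the remaining $d'$ (namely $d'<d$, where the inductive strict inequalities chain together, and $d'>d+1$, handled via Step 1 and the area bound) yields existence in class $d+1$.

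The delicate point is showing that $u_d$ is not conformal when $d<d^+$. For this I would prove that a conformal harmonic extension of $g$ is unique up to degree. Given two conformal extensions, compare them via the Hopf differential: it vanishes identically for both, and a unique-continuation/boundary argument (as in the sketch of Theorem~\ref{thmx3}) should show that they coincide.

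\textbf{Step 3: necessity.} Suppose $d>d^+$ (the case $d<d^-$ is symmetric). Adding $d-d^+$ small bubbles to $u^+$ gives
\begin{equation}
m_d\le E(u^+)+4\pi(d-d^+)=V(u_0)+4\pi d,
\end{equation}
and the reverse inequality is the area bound, so the two agree. A minimizer would therefore achieve equality, hence be a conformal extension of degree $d\ne d^+$, which contradicts the uniqueness established in Step~2. If $d^+=+\infty$ there is nothing to prove. I expect this uniqueness of conformal extensions, together with the non-conformality needed in the gluing construction, to be the main obstacle; the bubbling bookkeeping should be standard.
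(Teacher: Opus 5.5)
The paper only quotes this theorem from \cite{kuw} and gives no proof, so there is nothing to compare with line by line. Judged on its own, your architecture is the right one: area lower bound, bubbling dichotomy, strict gluing at non-conformal minimizers, and uniqueness of the conformal extension. Step~3 is correct once uniqueness is available. Two steps, however, do not work as written.

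\textbf{Far weak limits in Step 2.} Suppose a minimizing sequence for $m_{d+1}$ converges weakly to $u$ of degree $d'>d+1$. Step~1 then gives $m_{d+1}=m_{d'}+4\pi(d'-d-1)$, and $u$ minimizes in class $d'$. To contradict this you need a \emph{strict upper} bound on $m_{d+1}$ in terms of $m_{d'}$. The area functional only yields lower bounds $m_r\ge|V(u_0)+4\pi r|$. Likewise, $m_{d+1}<m_d+4\pi$ together with the non-strict gluing bounds $m_r\le m_s+4\pi|r-s|$ is compatible with that equality. So ``Step 1 and the area bound'' cannot close this case.

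The missing ingredient is the opposite-orientation gluing, applied to the weak limit. Since $d'>d\ge d_0\ge d^-$, $u$ is not anticonformal; this requires uniqueness of the anticonformal extension as well. Inserting a degree $-1$ bubble at a point where $u$ fails to be anticonformal gives $m_{d'-1}<m_{d'}+4\pi$. Hence $m_{d+1}\le m_{d'-1}+4\pi(d'-d-2)<m_{d'}+4\pi(d'-d-1)$, a contradiction. This is exactly how the paper argues in the proof of Theorem~\ref{Bl3}, where Lemma~\ref{lem:strict-comparison} is applied at the level $r$ of the weak limit.

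With both one-sided gluing lemmas the induction is in fact unnecessary. Fix $d^-\le d\le d^+$. A weak limit of degree $r<d$ would be a non-conformal minimizer, since $r<d^+$. Then $m_d\le m_{r+1}+4\pi(d-r-1)<m_r+4\pi(d-r)$, contradicting Step~1. The case $r>d$ is symmetric.

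\textbf{Uniqueness via the Hopf differential.} The Hopf differential vanishes for weakly conformal maps of \emph{both} orientations. So ``$H\equiv 0$ for both, plus unique continuation'' would equally identify a conformal extension with an anticonformal one. That is false: for $g(e^{i\theta})=P^{-1}(e^{id\theta})$, the example following the statement, $u^+\neq u^-$.

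On $\partial D^2$, $H\equiv0$ only gives $|u_r|=|g_\theta|$ and $u_r\perp g_\theta$, i.e.\ $u_r=\pm\, u\times g_\theta$. It is the orientation that fixes the sign. With that, two conformal extensions have the same Cauchy data; they are smooth up to $\partial D^2$, being minimizers in their class with smooth boundary data. Their difference $w$, extended by zero outside $D^2$, satisfies $|\Delta w|\le C(|w|+|\nabla w|)$ and vanishes on an open set, so $w\equiv0$.

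A simpler route: project stereographically from a point not on $g(\partial D^2)$. Both extensions become meromorphic functions, finite near $\partial D^2$, with equal boundary values. Their difference is holomorphic near $\partial D^2$ and vanishes there, hence vanishes identically by Schwarz reflection and the identity theorem.
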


Interestingly, there are non-constant boundary data $g:\partial D^2\to \sph^2$ which have both a conformal and anticonformal extension, e.g. $g(e^{i\theta})=P^{-1}(e^{i d\theta})$, where $P:\sph^2 \to \mathbb{C}$ is the stereographic projection from the north pole, so that for every $k\in \mathbb{N}$ there exists a boundary datum $g$ giving rise to minimizers in exactly $k$ homotopy classes, but generically there are minimizers in infinitely many homotopy classes.

\medskip

For the fractional Dirichlet energy in dimension $1$, the first result about the existence of minimizers in more than 2 homotopy classes that we have is the following straightforward consequence of our methods in \cite{HM}. First let us recall that half-harmonic maps in $I$ which are $C^0$ in $\R\setminus I$ are continuous in $\bar I$, see e.g. \cite{Sch} and Lemma \ref{bdryreg}.
 
\begin{thm}\label{exist3} Consider $g\in \dot H^{\frac12}(\R,\sph^1)\cap C^0(\R,\sph^1)$ non-constant outside $I=(-1,1)$, such that $g(-1)=g(1)$. Let $u_0$ minimize $E$ in $\M{E}_{g}$ and assume that $\deg(u_0,\bar I)=0$ (well defined thanks to regularity and $u_0(-1)=u_0(1)$). Call $d_0:=\deg(u_0)$. Then there are at least $3$ half-harmonic maps {in $\M{E}_g$}: the absolute minimizer $u_0\in \M{E}_{g,d_0}$, and maps $u_{\pm}$ minimizing in $\mathcal{E}_{g,d_0\pm 1}$.
\end{thm}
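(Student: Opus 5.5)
The plan is to follow the strategy of \cite{HM} behind Theorem \ref{thmx1}. There, for a minimizer $u_0$ of degree $d_0$, one shows that $\inf_{\M{E}_{g,d_0+\sigma}} E < E(u_0)+2\pi$ for the appropriate sign $\sigma\in\{\pm1\}$. This strict inequality is exactly what rules out loss of compactness, i.e.\ bubbling of a degree-$\pm1$ half-harmonic circle, which costs energy $2\pi$. The extra hypothesis $\deg(u_0,\bar I)=0$ should let us obtain the strict inequality for \emph{both} signs, not only for one. This would give minimizers $u_\pm\in\M{E}_{g,d_0\pm1}$. They are distinct from each other and from $u_0$ because their degrees differ, and they are critical points of $E$ in $\M{E}_g$, hence half-harmonic, since degree classes are open in $\M{E}_g$.

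The first step is the compactness part. Take a minimizing sequence in $\M{E}_{g,d}$ with $d=d_0\pm1$ and extract a weak limit $u$. Weak limits can lose degree only by concentration of bubbles, each carrying energy at least $2\pi$ with sign-matched degree. Hence if $\inf_{\M{E}_{g,d}}E<\inf_{\M{E}_{g,d'}}E+2\pi$ for all $d'\ne d$, the limit stays in $\M{E}_{g,d}$. Here we need $E\ge E(u_0)$ everywhere on $\M{E}_g$, and an energy identity $E(u_n)=E(u)+2\pi|d-\deg u|+o(1)$ (or at least the corresponding inequality) as in \cite{HM}. I would take this step verbatim from \cite{HM}, so everything reduces to the energy estimate $\inf_{\M{E}_{g,d_0\pm1}}E<E(u_0)+2\pi$.

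The second step, which is the main obstacle, is building competitors. The construction in \cite{HM} glues to $u_0$ a concentrated Möbius bubble $b_\ve(x)=\frac{x-p+i\ve}{x-p-i\ve}$ (or its conjugate) at a point $p\in I$. Then $E(u_0 b_\ve^{\pm1}) = E(u_0)+2\pi + \text{(interaction)}$, and the interaction term at first order is proportional to $\mp\ve\,\mathrm{Im}\big(\bar u_0 (-\Delta)^{\frac12}u_0\big)(p)$ or a similar quantity built from $\bar u_0 u_0'$. Since $u_0$ is half-harmonic, $(-\Delta)^{\frac12}u_0$ is parallel to $u_0$, so this naive term vanishes and one must look at the next-order interaction, which involves $u_0'(p)$ and its sign. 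If $u_0=e^{i\varphi}$ in $I$ with $\varphi$ smooth, the sign of the correction at $p$ is governed by $\varphi'(p)$, roughly as $-\ve\cdot(\text{sign})\cdot\varphi'(p)$. Thus to lower the energy for $d_0+1$ we need a point with $\varphi'(p)>0$, and for $d_0-1$ a point with $\varphi'(p)<0$. A second-variation (stability) argument at $p$, or the comparison with a nearby constant-in-$I$ competitor, would handle borderline cases.

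The third step is to produce both signs of $\varphi'$ from the hypotheses. Since $g(-1)=g(1)$, $u_0(-1)=u_0(1)$ by continuity up to the boundary (Lemma \ref{bdryreg}), and $\deg(u_0,\bar I)=0$ means $\varphi(1)=\varphi(-1)$. So either $\varphi'$ changes sign in $I$, and we are done, or $\varphi$ is constant on $I$, i.e.\ $u_0\equiv c$ in $I$. In the latter case I would rule out the constancy by the Hopf differential argument behind Theorem \ref{thmx2}. The equation $(-\Delta)^{\frac12}u_0\parallel u_0$ on $I$ with $u_0\equiv c$ there forces $\int_{\R\setminus I}\frac{\mathrm{Im}(\bar c\, g(\xi))}{(x-\xi)^2}d\xi=0$ for all $x\in I$. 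By analyticity in $x$, all moments $\int_{\R\setminus I}\mathrm{Im}(\bar c g)\,\xi^{-k}$-type quantities must vanish, and with the minimality of $u_0$ and $g$ non-constant outside $I$ this should give a contradiction (if needed, perturb to $u_0 e^{i\ve\psi}$ to contradict minimality). I expect the delicate part to be the precise sign computation of the interaction term in the second step, i.e.\ showing that it is genuinely driven by $\varphi'(p)$ with opposite signs for the two orientations.
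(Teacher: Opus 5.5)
Your plan is essentially the paper's proof. Non-constancy of $u_0$ in $I$ together with $\deg(u_0,\bar I)=0$ gives points where $u_0\wedge u_0'$ takes both signs; Lemma~\ref{l2} (i.e.\ \cite[Lemma 4.1]{HM}, the bubble-gluing estimate you describe) then yields $\inf_{\mathcal E_{g,d_0\pm1}}E<E(u_0)+2\pi$; and Proposition~\ref{p:jump} with the minimality of $u_0$ rules out degree loss, giving the minimizers $u_\pm$.

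There are two harmless corrections. First, your sign pairing is reversed: by Lemma~\ref{l2}, a point with $u_0\wedge u_0'>0$ lowers the degree. Otherwise $u=g$ with $g$ a Blaschke trace would contradict Theorem~\ref{thmBl}. This does not affect the argument, since both signs occur. Second, your moment argument for non-constancy (replacing the paper's citation of \cite[Lemma 4.3]{HM}) closes without any use of minimality. With $t=1/\xi$, the vanishing moments say that $t\mapsto \mathrm{Im}(\bar c\, g(1/t))$ is orthogonal to all polynomials on $[-1,1]$. Hence $g\in\{\pm c\}$ outside $I$, and so $g$ is constant there by continuity and $g(-1)=g(1)$.
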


The assumption $\deg(u_0,\bar I)=0$ of Theorem \ref{exist3} is satisfied, for instance, if $\deg(g,\R\setminus I):=\deg(g)-\deg(g,\bar I)=0$ and $E(g)\le 2\pi$, because in this case any absolute minimizer $u_0$ must have energy at most $2\pi$. Therefore,   either $\deg(u_0)=0$ or  $\deg(u_0)=\pm1$.  The latter case is impossible since $E(u_0)=2\pi$  would imply that $u_0$ is a  Blaschke product of degree $\pm1$, contradicting the hypothesis $g(1)=g(-1)$. But then $\deg(u_0,\bar I)=\deg(u_0)-\deg(g,\R\setminus I)=0$.

\medskip

We now focus on special boundary data, namely the restrictions of Blaschke products, namely holomorphic maps $\tilde h:\cl{\mathbb{C}_+}\to \cl{D^2} \subset \mathbb{C}$ of the form
\begin{equation}\label{Blas}
\tilde h(z)= e^{i\theta_0}\prod_{j=1}^k\frac{z-\beta_j}{z-\bar \beta_j},\quad \beta_1,\dots\beta_k\in \mathbb{C}_+,\, \theta_0\in \R.
\end{equation}
Call $h=\tilde h|_{\R\times\{0\}}$. Then
$$E(h)=\int_{\mathbb{C}_+} |\nabla \tilde h|^2 |dz|^2 =2\pi \deg (h)=2\pi k.$$
We define for $k>0$ the set of Blaschke products
$$\M{B}_k=\left\{\tilde h\text{ as in }\eqref{Blas}\right\} $$
and the set of their conjugates
$$\M{B}_{-k}=\left\{\bar{\tilde h}: \tilde h \in \M{B}_k\right\}.$$
For $k=0$, $\M{B}_0$ are just constant maps into $\sph^1$.

\medskip

Notice that for $\tilde h\in \M{B}_{\pm k}$ and $h(x)=\tilde h(x,0)$, we have $\deg(h)=\pm k$, $E(h)=2\pi|k|$.
The following non-existence result is a generalization of \cite[Thm. 1.3]{HM}.

\begin{thm}\label{thmBl} Let $g=\tilde h|_{\R\times\{0\}}$ for some $\tilde h\in \mathcal{B}_k$, $k\ne 0$. Then
\begin{equation}\label{enBla}
\inf_{\mathcal{E}_{g,k}} E= 2\pi |k| = E(g).
\end{equation}
Moreover, for $d>k>0$, or for $d<k<0$,
\begin{equation}\label{enBla2}
\inf_{\mathcal{E}_{g,d}} E= 2\pi |d|,
\end{equation}
and it is not attained.
\end{thm}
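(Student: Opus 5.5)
The plan is to get lower bounds from a degree–energy inequality, and matching upper bounds from explicit competitors built by gluing extra Blaschke factors onto $g$ near a point of $\R\setminus I$.

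\textbf{Lower bound.} For every $u\in\dot H^{\frac12}(\R,\sph^1)$ we use
$$E(u)\ge 2\pi|\deg(u)|.$$
This follows from \eqref{defdeg1} and Cauchy--Schwarz, since the Hilbert transform is an $L^2$-isometry. Equivalently, with the harmonic extension,
$$\int_{\mathbb{C}_+}|\nabla\tilde u|^2\ge 2\int_{\mathbb{C}_+}|\det\nabla \tilde u|\ge 2\pi|\deg u|.$$
This gives $\inf_{\M E_{g,d}}E\ge 2\pi|d|$ for every $d$. For $d=k$ the datum $g$ itself lies in $\M E_{g,k}$ and has energy $2\pi|k|$, so \eqref{enBla} follows immediately.

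\textbf{Upper bound for $d>k>0$ (the case $d<k<0$ follows by conjugation).} Set $m=d-k$. We build a sequence $u_n\in\M E_{g,d}$ by inserting $m$ concentrated Blaschke bubbles inside $I$. Concretely, take
$$\tilde u_n=\tilde h\cdot\prod_{j=1}^m\frac{z-\beta_{j,n}}{z-\bar\beta_{j,n}},\qquad \beta_{j,n}=x_j+i\ve_n,\quad x_j\in I \text{ distinct},\ \ve_n\to0.$$
This is itself a Blaschke product of degree $d$, so its energy is exactly $2\pi d$. The trouble is that its trace differs from $g$ outside $I$. On $\R\setminus I$ the extra factor tends to $1$ in $\dot H^{\frac12}$ as $\ve_n\to0$, since each factor is a bubble concentrating at $x_j\in I$. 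We therefore correct it: replace each factor $b_{j,n}$ by $\exp(i\varphi_n\arg b_{j,n})$, with a cutoff $\varphi_n$ equal to $1$ on a small neighbourhood of $x_j$ and $0$ outside $I$. Equivalently, use a logarithmic cut-off so that the phase correction has vanishing $\dot H^{\frac12}$ cost. The corrected maps equal $g$ outside $I$, have degree $d$, and satisfy $E(u_n)\to2\pi d$. Degree is preserved because the perturbation is small in $\dot H^{\frac12}$ and $\deg$ is continuous there.

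\textbf{Non-attainment (the main obstacle).} Suppose $u\in\M E_{g,d}$ has $E(u)=2\pi|d|$. Equality in the Cauchy--Schwarz/Jacobian inequality forces $\tilde u$ to be conformal (holomorphic when $d>0$) and $|\tilde u|\le1$ with boundary values of modulus one. By the characterization used in \cite{HM}, $\tilde u$ is then a finite Blaschke product of degree $d$. Its trace agrees with that of $\tilde h$ on the open set $\R\setminus\bar I$. By the identity theorem (or Schwarz reflection across $\R\setminus\bar I$), $\tilde u\equiv\tilde h$. But then $d=\deg u=\deg h=k$, contradicting $d\neq k$. Hence the infimum is not attained.

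The delicate step is justifying that equality forces $\tilde u$ to be holomorphic rather than merely conformal somewhere. We handle it as in \cite[Thm.~1.3]{HM}: the equality case of $|\nabla\tilde u|^2\ge 2\det\nabla\tilde u$ holds pointwise a.e., which gives the Cauchy--Riemann equations for the harmonic $\tilde u$.
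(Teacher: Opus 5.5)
Your proposal is correct and follows the paper's route: the lower bound $E(u)\ge 2\pi|\deg u|$ with equality only for traces of Blaschke products (which the paper takes from \cite[Lemma 3.1]{BMRS} or \cite[Lemma 5.1]{HM}), the matching upper bound by attaching $|d-k|$ bubbles inside $I$ (the paper's Lemma \ref{l1}), and non-attainment because two Blaschke products whose traces agree on $\R\setminus \bar I$ must coincide, forcing $d=k$. The one point needing care in your explicit bubble construction is that $\arg b_{j,n}$ must be the principal branch, which tends to $0$ at $\pm\infty$ and has its $2\pi$-jump at $x_j$ inside the set where $\varphi_n\equiv 1$; with a continuous lift instead, $\exp(i\varphi_n\arg b_{j,n})$ would have a compactly supported continuous phase and hence degree $0$ instead of $1$.
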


Importantly, any $g$ for which \eqref{enBla} is satisfied is the trace of a Blaschke product (see e.g. \cite[Lemma 3.1]{BMRS} or \cite[Lemma 5.1]{HM}), either holomorphic or anti-holomorphic, but not both (except for the case $k=0$, $g\equiv const$). Then, contrary to the case of harmonic maps from $D^2$ into $\sph^2$, in the half-harmonic case we do not have any example of boundary data $g$ for which there are minimizers in $\mathcal{E}_{g,k}$ for finitely many $k$'s only.

\begin{pb} Is there any $g\in \dot H^\frac12(\R,\sph^1)$ such that the infimum of $E$ in $\mathcal{E}_{g,k}$ 
is attained for finitely many values of $k\in \mathbb{Z}$?
\end{pb}

Notice that \eqref{enBla} and $\inf_{\mathcal{E}_{g,0}} E>0$ suggest that the intermediate energy levels might be separated by jumps smaller than $2\pi$, hence, by the same compactness argument as in \cite{HM}, there would be minimizers in $k+1$ homotopy classes.

\begin{pb} Given $g=\tilde h|_{\R\times\{0\}}$ for some $\tilde h\in \mathcal{B}_k$, $k> 0$ (and similarly for $k<0$) and setting
$$m_{g,j}:= \inf_{\mathcal{E}_{g,j}} E,$$
is it true that
\begin{equation}\label{mj}
|m_{g,j+1}-m_{g,j}|<2\pi\quad \text{for }0\le j\le k-1,
\end{equation}
and, therefore, $E$ has a minimizer in $\mathcal{E}_{g,j}$ for $0\le j\le k$?
\end{pb}

This is open even in the case $\tilde h_\ve(z):=\left(\frac{z-i\ve}{z+i\ve}\right)^k$, $g=g_\ve:=\tilde h_\ve |_{\R\times\{0\}}$ and $\ve>0$ is take arbitrarily small. In this case, since $g_\ve\to 1$ uniformly in $\R\setminus I$ as $\ve\to 0$, we easily see that $m_{g_\ve,0}\to 0^+$ as $\ve\to 0$ and we might expect
$$0<m_{g_\ve,j+1}-m_{g_\ve,j}<2\pi\quad \text{for }0\le j\le k-1,$$
for any $\ve>0$ sufficiently small. In this case $m_{g_\ve,j}$ would be attained for $0\le j\le k$.

While this is open in general, the following result gives Blaschke data for which $m_{g,j}$ is attained for up to four values of $j\in \{0,\dots,k\}$.

\begin{thm}\label{Bl3}  Consider $\tilde h\in\M B_k$ and $g=\tilde h|_{\R\times\{0\}}$,
and set
\[
q:=\frac1{2\pi}\int_{\R\setminus I} g\wedge g'\,dx\in (0,k),
\]
where for two vectors $v,w\in \R^2\simeq \mathbb{C}$ we define $v\wedge w:=v_1w_2-v_2w_1\in \R$.
We have:
\begin{itemize}
\item if $q\in (k-1,k)$, then $m_{g,d}$ is achieved for $d=k-1,k$;
\item if $q\in (k-2,k-1]$, then $m_{g,d}$ is achieved for $d=k-2,k-1,k$;
\item if $q=\ell \in\{1,\dots, k-2\}$, then $m_{g,d}$ is achieved for $d=\ell-1,\ell,\ell+1,k$;
\item if $q\in (\ell,\ell+1)$ with $\ell\in \{0,\dots, k-2\}$, then $m_{g,d}$ is achieved for $d=\ell,\ell+1,k$.
\end{itemize}
\end{thm}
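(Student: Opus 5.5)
The plan is to combine three ingredients: a compactness criterion, a rigidity lower bound, and explicit upper bounds on $m_{g,d}$ obtained from competitors. The point of all of them is to verify strict gap inequalities of the type
\begin{equation}
m_{g,d}<m_{g,d+1}+2\pi,\qquad m_{g,d}<m_{g,d-1}+2\pi .
\end{equation}

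\emph{Compactness criterion.} As in \cite{HM}, a minimizing sequence in $\mathcal{E}_{g,d}$ converges weakly to some $u\in\mathcal{E}_{g,d'}$. The degree can only be lost through bubbles concentrating in $\bar I$, and each bubble carries energy at least $2\pi$ per unit of degree. Hence $m_{g,d}\ge m_{g,d'}+2\pi|d-d'|$ whenever $d'\neq d$. Together with the elementary bound $m_{g,d\pm1}\le m_{g,d}+2\pi$ (glue a small bubble inside $I$), the two strict inequalities above imply that $m_{g,d}$ is attained.

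\emph{Rigidity lower bound.} I will use $E(u)\ge 2\pi|\deg u|$ for every $u$. Equality holds only if $u$ is the trace of a Blaschke product or of the conjugate of one (\cite[Lemma 5.1]{HM}). If such a trace coincides with $g$ on $\R\setminus I$, then by analyticity on the exterior set it equals $g$ everywhere, so its degree is $k$. Consequently $m_{g,d}>2\pi|d|$ for every $d\neq k$, and in particular $m_{g,k-1}>2\pi(k-1)$.

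\emph{The case $d=k$, common to all four items.} Since $m_{g,k}=2\pi k=E(g)$ by Theorem \ref{thmBl} and $m_{g,k+1}=2\pi(k+1)$, the criterion at $d=k$ reduces to $2\pi k<m_{g,k-1}+2\pi$. This holds by the rigidity bound, so $m_{g,k}$ is always attained.

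\emph{Sharper lower bound in terms of $q$.} For the other degrees I need a lower bound that sees $q$. For $u\in\mathcal{E}_{g,d}$ write
\[
\deg u=q+\frac1{2\pi}\int_I u\wedge u'\,dx ,
\]
so the interior winding is $d-q$. The plan is to prove an estimate of the form $m_{g,d}\ge 2\pi\max\{|d|,\,\phi(d-q)\}$, where $\phi$ is increasing in $|d-q|$. I would obtain it by comparing the energy of $u$ with that of a map realizing the fractional part of the winding on $I$, i.e. a Möbius/Blaschke factor with one zero placed near $\bar I$.

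\emph{Upper bounds by competitors.} I would build competitors of two kinds:
\begin{itemize}
\item[(a)] Start from $g=\tilde h|_\R$ and push some of the zeros $\beta_j$ of $\tilde h$ toward points of $I\subset\R$, correcting outside $I$ by a cut-off. Each zero pushed this way removes one unit of degree at a cost strictly less than $2\pi$. This is the step where the position of $q$ relative to the integers enters.
\item[(b)] Take the product of the absolute minimizer $u_0$ with a single concentrated bubble or anti-bubble placed inside $I$.
\end{itemize}
The first step here is to show that $\deg u_0$ equals $\ell$ or $\ell+1$ when $q\in(\ell,\ell+1)$. When $q=\ell$, the extra degrees $\ell\pm1$ come from a symmetric splitting: by a suitable interpolation the interior winding $\pm1$ can be realized with energy strictly below $m_{g,\ell}+2\pi$. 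When $q\in(k-2,k-1]$ or $q\in(k-1,k)$, the ranges merge with the top degree $k$, and this gives the first two items.

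\emph{Expected obstacle.} The main difficulty is the quantitative upper bound. Showing that $m_{g,d}$ lies strictly less than $2\pi$ above its neighbours for the specific $d$ listed requires a precise expansion of the energy of the concentrating Blaschke-type competitors. I would first attempt this with an explicit Möbius factor concentrating at an endpoint $\pm1$ and compute the resulting interaction term with $g$ via the Hopf-differential and Poisson formulas, in the hope that its sign is governed by $q-d$.
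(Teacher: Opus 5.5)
Your reduction is sound. By the weak-limit jump and Lemma \ref{l1}, the nearest-neighbour inequalities $m_{g,d}<m_{g,d\pm1}+2\pi$ chain to $m_{g,d}<m_{g,\ell}+2\pi|d-\ell|$ for all $\ell\neq d$, which gives attainment as in Lemma \ref{lem:attainment-criterion}. The case $d=k$ is correct but trivial, since $g\in\mathcal{E}_{g,k}$ and $E(g)=2\pi k=m_{g,k}$.

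Your rigidity step is misstated. The claim $m_{g,d}>2\pi|d|$ for all $d\neq k$ is false for $d>k$, where Theorem \ref{thmBl} gives $m_{g,d}=2\pi d$. Rigidity only says that $2\pi|d|$ is not attained. For $d<k$ strictness is true, but it needs an extra weak-limit argument.

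\textbf{The gap.} For $d\neq k$ you never establish a single strict gap:
\begin{itemize}
\item $\phi$ is never specified.
\item The competitors in (a) are not built, and the cut-off outside $I$ is uncontrolled.
\item Gluing a bubble to $u_0$ in (b) only gives the non-strict bound $m_{g,d\pm1}\le m_{g,d}+2\pi$.
\item The claim $\deg u_0\in\{\ell,\ell+1\}$ is unsupported.
\item Your last paragraph concedes that the decisive sign is only hoped for.
\end{itemize}
More structurally, checking $m_{g,d}<m_{g,d\pm1}+2\pi$ head-on requires lower bounds on $m_{g,d\pm1}$ at levels you do not know to be attained. No explicit competitor supplies such lower bounds.

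\textbf{What the paper does instead.} No energy expansion is needed; the argument has three ingredients.
\begin{enumerate}
\item Lemma \ref{l2}: at any $x_0\in I$ where a half-harmonic $u$ has $u(x_0)\wedge u'(x_0)\neq0$, inserting a bubble of degree $-\operatorname{sign}(u(x_0)\wedge u'(x_0))$ costs strictly less than $2\pi$. Only the orientation opposite to that of $u$ at $x_0$ gives this gain, which is why your (b) cannot be strict.
\item This is where $q$ enters. If $m_{g,d}$ is attained by $u_d$, then $\frac1{2\pi}\int_I u_d\wedge u_d'\,dx=d-q$, so $u_d\wedge u_d'$ takes the sign of $d-q$ somewhere. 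This gives $m_{g,d-1}<m_{g,d}+2\pi$ if $d>q$ and $m_{g,d+1}<m_{g,d}+2\pi$ if $d<q$. If $d=q$ you get both, since a minimizer is non-constant in $I$ (Lemma \ref{lem:strict-comparison}).
\item A contradiction argument that only ever uses attained levels. Take $q\in(\ell,\ell+1)$ and suppose a minimizing sequence for $m_{g,\ell+1}$ converges weakly to degree $r\neq\ell+1$. By Lemma \ref{l4} the limit minimizes at level $r$ and $m_{g,\ell+1}=m_{g,r}+2\pi|\ell+1-r|$.
\begin{itemize}
\item $r>k$ is excluded by Theorem \ref{thmBl}.
\item Otherwise $r>q$ if $r>\ell+1$, and $r<q$ if $r\le\ell$.
\end{itemize}
In either case the strict comparison at the attained level $r$ points towards $\ell+1$ and contradicts the equality.
\end{enumerate}
The same reasoning gives attainment at $\ell$. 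For $q=\ell\in\mathbb{Z}$ it gives the levels $\ell-1,\ell,\ell+1$, where the case $r=q$ uses the non-constancy of the minimizer at level $q$.
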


We found no technique which allows to prove the existence of minimisers in more than $4$ degree classes in Theorem \ref{Bl3}. Moreover, except for $d>k$, in all cases in which we cannot prove that $m_{g,d}$ is achieved (for instance, when $d<0$), we cannot prove that it is not achieved either.




\section{Local and relative degree, their additivity and jumps}

\subsection{Product structure and degree additivity}

We will need a few lemmas.

\begin{lem}\label{product}
If $v,w\in\dot H^{\frac12}(\R,\sph^1)$, then $vw\in\dot H^{\frac12}(\R,\sph^1)$ and
\begin{equation}\label{Evw}
\sqrt{E(vw)}\le \sqrt{E(v)}+\sqrt{E(w)}.
\end{equation}
\end{lem}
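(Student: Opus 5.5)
The plan is to work directly with the Gagliardo form of the energy in \eqref{12energybis},
$$E(u)=\frac{1}{2\pi}\int_\R\int_\R \frac{|u(x)-u(y)|^2}{|x-y|^2}\,dx\,dy,$$
and to use the elementary pointwise splitting
$$v(x)w(x)-v(y)w(y)=\bigl(v(x)-v(y)\bigr)w(x)+v(y)\bigl(w(x)-w(y)\bigr).$$
Since $|w(x)|=1$ and $|v(y)|=1$ for a.e.\ $x,y$, the triangle inequality in $\mathbb{C}$ gives
$$|vw(x)-vw(y)|\le |v(x)-v(y)|+|w(x)-w(y)|\quad\text{for a.e. }(x,y)\in\R^2.$$
First, measurability of $vw$ is clear, and $|vw|=1$ a.e., so $vw$ takes values in $\sph^1$. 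It therefore only remains to control the seminorm.

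Next, divide the pointwise inequality by $|x-y|$ and regard both sides as functions on $\R^2$ with the measure $\frac{dx\,dy}{2\pi}$. Set
$$F_v(x,y):=\frac{|v(x)-v(y)|}{|x-y|},$$
and similarly $F_w$ and $F_{vw}$. Then $0\le F_{vw}\le F_v+F_w$, and $E(u)=\|F_u\|^2_{L^2(\R^2,dx\,dy/2\pi)}$. Minkowski's inequality in $L^2(\R^2)$ now yields
$$\sqrt{E(vw)}=\|F_{vw}\|_{L^2}\le \|F_v\|_{L^2}+\|F_w\|_{L^2}=\sqrt{E(v)}+\sqrt{E(w)},$$
which is exactly \eqref{Evw}. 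In particular $[vw]_{H^{1/2}}<\infty$, so $vw\in \dot H^{\frac12}(\R,\sph^1)$.

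There is no real obstacle in this argument. The only point requiring care is to use the Gagliardo double-integral form of the energy rather than the $(-\Delta)^{\frac14}$ or harmonic-extension forms, since products do not interact well with those. The equality of the three expressions is already recorded in \eqref{12energybis}, so the argument is complete once the unimodularity of $v$ and $w$ has been used in the pointwise bound.
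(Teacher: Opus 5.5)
Your proof is correct and follows essentially the same route as the paper: the same pointwise splitting $v(x)w(x)-v(y)w(y)=w(x)(v(x)-v(y))+v(y)(w(x)-w(y))$, the same unimodularity-based bound $|vw(x)-vw(y)|\le|v(x)-v(y)|+|w(x)-w(y)|$, then division by $|x-y|$, Minkowski's inequality in $L^2(\R\times\R)$, and the Gagliardo form of \eqref{12energybis}. Your separate remarks on measurability and $|vw|=1$ a.e., which the paper leaves implicit, make the membership $vw\in\dot H^{\frac12}(\R,\sph^1)$ explicit.
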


\begin{proof}
Since $|v|=|w|=1$, for a.e. $x,y\in\R$ we have
\[
v(x)w(x)-v(y)w(y)=w(x)(v(x)-v(y))+v(y)(w(x)-w(y)),
\]
hence
\begin{equation}\label{vw}
|v(x)w(x)-v(y)w(y)|
\le |v(x)-v(y)|+|w(x)-w(y)|.
\end{equation}
Dividing \eqref{vw} by $|x-y|$, taking the $L^2(\R\times\R)$ norm and using Minkowski's inequality gives
\[\begin{split}
\left(\iint_{\R\times\R}\frac{|v(x)w(x)-v(y)w(y)|^2}{|x-y|^2}\,dxdy\right)^{1/2}
\le&
\left(\iint_{\R\times\R}\frac{|v(x)-v(y)|^2}{|x-y|^2}\,dxdy\right)^{1/2}\\
&
+
\left(\iint_{\R\times\R}\frac{|w(x)-w(y)|^2}{|x-y|^2}\,dxdy\right)^{1/2},
\end{split}
\]
and \eqref{Evw} follows from \eqref{12energybis}.
\end{proof}

\begin{lem}\label{lemunvn}
Let $u_n\to u$ and $v_n\to v$ in $H^{\frac12}(\sph^1,\sph^1)$.
Then
$ u_nv_n\to uv$ in $H^{\frac12}(\sph^1,\sph^1)$
\end{lem}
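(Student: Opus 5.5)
We prove that $u_nv_n\to uv$ in $L^2(\sph^1)$ and that the Gagliardo seminorm of $u_nv_n-uv$ tends to zero. We write
\[
u_nv_n-uv=(u_n-u)v_n+u(v_n-v)=:A_n+B_n ,
\]
and treat each term with the same mechanism as in Lemma \ref{product}, namely the product rule for differences combined with $|u|=|v_n|=1$. The $L^2$ part is immediate: $\|A_n\|_{L^2}\le \|u_n-u\|_{L^2}$ and $\|B_n\|_{L^2}\le\|v_n-v\|_{L^2}$. The seminorm on $\sph^1$ is $\iint |f(x)-f(y)|^2/|x-y|^2$, with chordal distance.

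For $B_n$ we split the difference as
\[
B_n(x)-B_n(y)=u(x)\big((v_n-v)(x)-(v_n-v)(y)\big)+\big(u(x)-u(y)\big)(v_n-v)(y).
\]
The first piece is bounded by $[v_n-v]_{H^{1/2}}\to0$. The second piece gives the integral
\[
\iint \frac{|u(x)-u(y)|^2}{|x-y|^2}\,|v_n(y)-v(y)|^2\,dx\,dy.
\]
The weight $|v_n-v|^2$ is bounded by $4$. Passing to a subsequence, $v_n\to v$ a.e., and the function $|u(x)-u(y)|^2/|x-y|^2$ is integrable. Dominated convergence therefore shows this integral tends to $0$. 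Since every subsequence has a further subsequence along which the integral tends to zero, the whole sequence converges.

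For $A_n$ we split similarly:
\[
A_n(x)-A_n(y)=v_n(x)\big((u_n-u)(x)-(u_n-u)(y)\big)+(u_n-u)(y)\big(v_n(x)-v_n(y)\big).
\]
The first piece is controlled by $[u_n-u]_{H^{1/2}}\to0$. The second piece is the main obstacle, because the factor $v_n$ carrying the oscillation now varies with $n$, so dominated convergence does not apply directly. To handle it, we write $v_n(x)-v_n(y)=(v(x)-v(y))+\big((v_n-v)(x)-(v_n-v)(y)\big)$. The part with $v(x)-v(y)$ tends to $0$ exactly as in the $B_n$ argument, using $|u_n-u|\le2$ and a.e. convergence along subsequences. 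For the remaining part we use $|u_n-u|\le 2$ to bound it by $4[v_n-v]_{H^{1/2}}^2\to0$.

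Combining the four pieces with Minkowski's inequality gives $[u_nv_n-uv]_{H^{1/2}}\to0$. Together with the $L^2$ estimate, this proves the lemma. Finally, $u_nv_n$ and $uv$ are $\sph^1$-valued, so the convergence indeed takes place in $H^{\frac12}(\sph^1,\sph^1)$.
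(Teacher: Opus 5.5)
Your proof is correct and is essentially the paper's argument: after splitting $v_n=v+(v_n-v)$, your pieces reproduce the paper's decomposition $u_nv_n-uv=a_nv+ub_n+a_nb_n$ (with $a_n=u_n-u$, $b_n=v_n-v$), and the cross term $a_nb_n$ is controlled by $|a_n|\le 2$ just as you do. The only difference is cosmetic: the paper shows $\int|a_n(y)|^2K_w(y)\,dy\to0$, where $K_w(y)=\int\frac{|w(x)-w(y)|^2}{|x-y|^2}\,dx\in L^1$, by truncating on $\{K_w\le A\}$ and using $L^2$ convergence, while you use dominated convergence along a.e.-convergent subsequences plus the subsequence principle, which is equally valid.
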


\begin{proof}
Let $a_n\to 0$  in
$H^{\frac12}(\sph^1,\mathbb C)$ with
\[
\sup_n\|a_n\|_{L^\infty(\sph^1)}<\infty,
\]
and let $w\in H^{\frac12}(\sph^1,\mathbb C)\cap L^\infty(\sph^1,\mathbb C)$.
We claim that
\begin{equation}\label{auxclaim}
a_nw\to0
\quad\text{in }H^{\frac12}(\sph^1,\mathbb C).
\end{equation}
Indeed,
\[
\|a_nw\|_{L^2(\sph^1)}
\le
\|w\|_{L^\infty(\sph^1)}\|a_n\|_{L^2(\sph^1)}
\to0.
\]
Moreover,
\[
|a_n(x)w(x)-a_n(y)w(y)|
\le
|w(x)|\,|a_n(x)-a_n(y)| +
|a_n(y)|\,|w(x)-w(y)|.
\]
Therefore,
\[
[a_nw]_{H^{\frac12}(\sph^1)}^2
\le
2\|w\|_{L^\infty(\sph^1)}^2[a_n]_{H^{\frac12}(\sph^1)}^2
+
2\int_{\sph^1}|a_n(y)|^2K_w(y)\,d\sigma(y),
\]
where
\[
K_w(y)
:=
\int_{\sph^1}
\frac{|w(x)-w(y)|^2}{|x-y|^2}\,d\sigma(x).
\]
Since $w\in H^{\frac12}(\sph^1)$, we have $K_w\in L^1(\sph^1)$. Setting
\[
M_0:=\sup_n\|a_n\|_{L^\infty(\sph^1)},
\]
for every $A>0$ we have
\[
\begin{aligned}
\int_{\sph^1}|a_n|^2K_w\,d\sigma
&=
\int_{\{K_w\le A\}}|a_n|^2K_w\,d\sigma
+
\int_{\{K_w>A\}}|a_n|^2K_w\,d\sigma \\
&\le
A\|a_n\|_{L^2(\sph^1)}^2
+
M_0^2\int_{\{K_w>A\}}K_w\,d\sigma.
\end{aligned}
\]
First choosing $A$ sufficiently large, and then letting $n\to\infty$ yields
\[
\int_{\sph^1}|a_n|^2K_w\,d\sigma\to0.
\]
This proves \eqref{auxclaim}.

Now set
\[
a_n:=u_n-u,
\qquad
b_n:=v_n-v,
\]
and write
\begin{equation}\label{unvn}
u_nv_n-uv
=
a_nv+ub_n+a_nb_n.
\end{equation}
Since $u_n,u,v_n,v$ are $\sph^1$-valued, we have
$\|a_n\|_{L^\infty(\sph^1)}
\le2$,
and $\|b_n\|_{L^\infty(\sph^1)}
\le2$, and by \eqref{auxclaim},
\[
a_nv\to0,
\qquad
ub_n\to0
\quad\text{in }H^{\frac12}(\sph^1,\mathbb{C}).
\]
It remains to estimate $a_nb_n$ on the right-hand side of \eqref{unvn}. We have
\[
\|a_nb_n\|_{L^2(\sph^1)}
\le
\|a_n\|_{L^\infty(\sph^1)}\|b_n\|_{L^2(\sph^1)}
\to0.
\]
Furthermore,
\[
\begin{aligned}
|a_n(x)b_n(x)-a_n(y)b_n(y)|
&\le
|a_n(x)|\,|b_n(x)-b_n(y)| \\
&\quad+
|b_n(y)|\,|a_n(x)-a_n(y)|,
\end{aligned}
\]
and hence
\[
[a_nb_n]_{H^{\frac12}(\sph^1)}^2
\le
2\|a_n\|_{L^\infty(\sph^1)}^2[b_n]_{H^{\frac12}(\sph^1)}^2
+
2\|b_n\|_{L^\infty(\sph^1)}^2[a_n]_{H^{\frac12}(\sph^1)}^2
\to0,
\]
which yields $a_nb_n\to0$ in $H^{\frac12}(\sph^1)$. This completes the proof.
\end{proof}

\begin{lem}\label{lem:degree-additivity-product}
For every $u,v\in \dot H^{\frac12}(\mathbb R,\sph^1)$ one has
\begin{equation}\label{degvw}
        \deg(uv)=\deg(u)+\deg(v),
\end{equation}
or, equivalently,
\[
        \deg(u)=\deg(v)+\deg(u\bar v).
\]
\end{lem}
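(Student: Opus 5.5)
We plan to prove \eqref{degvw} by density and continuity, reducing to smooth maps where the degree is the classical winding number. The degree $\deg(u)$ is defined as $\deg(u\circ\pi)$, where $u\circ\pi\in H^{\frac12}(\sph^1,\sph^1)$. Since the stereographic projection is conformal and the $\dot H^{\frac12}$ seminorm is conformally invariant, composition with $\pi$ is an isometry for $E$. Moreover $(uv)\circ\pi=(u\circ\pi)(v\circ\pi)$. Hence it suffices to prove $\deg(UV)=\deg(U)+\deg(V)$ for $U,V\in H^{\frac12}(\sph^1,\sph^1)$, where $\deg$ is the Brezis--Nirenberg VMO degree, which coincides with \eqref{defdeg1} by \cite[Section 2]{HM}.

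The first step is density. By the Schoen--Uhlenbeck / Brezis--Nirenberg density argument in dimension $1$, smooth maps $C^\infty(\sph^1,\sph^1)$ are dense in $H^{\frac12}(\sph^1,\sph^1)$. A concrete approximation is $U_\ve=U*\rho_\ve/|U*\rho_\ve|$: since $H^{\frac12}\subset VMO$, we have $|U*\rho_\ve|\to1$ uniformly, and $U_\ve\to U$ in $H^{\frac12}$. So choose smooth $U_n\to U$ and $V_n\to V$ in $H^{\frac12}$.

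The second step is continuity. By Lemma \ref{lemunvn}, $U_nV_n\to UV$ in $H^{\frac12}(\sph^1,\sph^1)$. The degree is continuous, hence locally constant, on $H^{\frac12}(\sph^1,\sph^1)$. This can be seen from \eqref{defdeg1}, which is a continuous bilinear expression in $(-\Delta)^{\frac14}u$ and is integer valued. Alternatively one can use the Fourier formula $\deg U=\sum_n n|\hat U(n)|^2$, which is continuous in $H^{\frac12}$ because $\sum |n||\hat U(n)|^2$ controls it. Therefore $\deg(U_n)\to\deg U$, $\deg(V_n)\to\deg V$ and $\deg(U_nV_n)\to\deg(UV)$, and all of these are eventually constant.

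The third step is the smooth case. For smooth $U_n,V_n$ the winding number formula gives
\[
\deg(U_nV_n)=\frac{1}{2\pi i}\int_{\sph^1}\frac{(U_nV_n)'}{U_nV_n}=\frac{1}{2\pi i}\int_{\sph^1}\frac{U_n'}{U_n}+\frac{1}{2\pi i}\int_{\sph^1}\frac{V_n'}{V_n},
\]
which is $\deg U_n+\deg V_n$. Passing to the limit gives \eqref{degvw}.

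The equivalent form follows by applying \eqref{degvw} to $u\bar v$ and $v$, using $(u\bar v)v=u$ because $|v|=1$. The only delicate point is the identification of the definition \eqref{defdeg1} (on $\R$) with the VMO degree on $\sph^1$ and the continuity of $\deg$ in the $H^{\frac12}$ topology. We take both from \cite{HM}. If needed, continuity follows directly from \eqref{defdeg1} by Cauchy--Schwarz, since $\mathcal H$ is an $L^2$-isometry.
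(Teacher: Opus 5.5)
Your proposal is correct and is essentially the paper's first proof: you treat smooth maps with the winding-number integral, approximate by smooth maps in $H^{\frac12}(\sph^1,\sph^1)$, get convergence of the products from Lemma \ref{lemunvn}, and conclude by continuity of the degree. The paper also gives a second, independent argument via the Brezis--Nirenberg VMO lifting $u=e^{ik\theta+i\varphi}$, for which additivity is immediate.
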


\begin{proof} We give two independent proofs of this lemma.
Up to stereographic projection, we can consider $u,v$ as maps in $H^\frac12(\sph^1,\sph^1)$. 

If $u,v\in C^1(\sph^1,\sph^1)$, \eqref{degvw} follows from
$$\overline{uv}(uv)'=\bar u u' + \bar v v'$$
(here $u'=du/d\theta$, etc...) and the classical integral formula (see e.g. \cite[Sec. 2]{HM})
\[
        \deg(uv)        =     \frac{1}{2\pi i}\int_{\sph^1}\overline{uv}(uv)'\,d\theta =        \frac{1}{2\pi i}\int_{\sph^1}\bar u u'\,d\theta        +        \frac{1}{2\pi i}\int_{\sph^1}\bar v v'\,dx=\deg(u)+\deg(v) .\]
For the general case, consider sequences $(u_n),(v_n)\subset C^1(\sph^1,\sph^1)$ with $u_n\to u$ and $v_n\to v$ in $H^\frac12(\sph^1,\sph^1)$ (these exist by e.g. \cite[Thm. 1.5]{AM} or \cite[Prop. A.2]{HM}). By continuity of the degree in $H^\frac12(\sph^1,\sph^1)$, and since $u_n v_n\to uv$ in $H^\frac12(\sph^1,\sph^1)$ by Lemma \ref{lemunvn}, we have
$$\deg(uv)=\lim_{n\to +\infty} \deg(u_nv_n)=\lim_{n\to \infty} (\deg(u_n)+\deg(v_n))=\deg(u)+\deg(v).$$

An alternative proof, uses the lifting theorem of Brezis--Nirenberg
(Theorem 3 and Remark 10(iii) in \cite{BN}), according to which, every
$u\in \operatorname{VMO}(\sph^1,\sph^1)$ admits a representation
\[
        u(e^{i\theta})=e^{ik\theta+i\varphi(\theta)},
        \qquad k=\deg u,\quad \varphi\in \operatorname{VMO}(\sph^1,\mathbb R).
\]
Since $H^{\frac12}(\sph^1)\subset \operatorname{VMO}(\sph^1)$ and the
$H^{\frac12}$ degree agrees with the VMO degree, see
\cite[Proposition 2.1]{HM}, this applies to
$H^{\frac12}$-maps. Hence, writing
\[
        u=e^{ia\theta+i\varphi},\qquad
        v=e^{ib\theta+i\psi},
\]
with $\varphi,\psi\in \operatorname{VMO}(\sph^1)$, $a=\deg(u)$, $b=\deg(v)$,  we obtain
\[
        uv=e^{i(a+b)\theta+i(\varphi+\psi)},
\]
and therefore
\[
        \deg(uv)=a+b=\deg(u)+\deg(v).
\]
\end{proof}

\subsection{Local and relative degrees}

Given $a\in (0,1)$, set $I_+=I_+(a):=(a,1)$, $I_-=I_-(a):=(-1,-a)$. Unless necessary, we will drop the $a$ and simply write $I_+$ and $I_-$. For $u\in \dot H^{\frac12}(\R,\sph^1)$ such that $u\equiv 1$ on $\R\setminus (I_-\cup I_+)$ define $\deg(u,I_\pm)$ as follows. First set
\[
P_{\pm}u:=
\begin{cases}
u(x), & x\in I_\pm,\\
1, & x\notin I_\pm.
\end{cases}
\]
Then, by Lemma \ref{Tpm} below applied to $f=u-1$, $P_{\pm}u\in \dot H^{\frac12}(\R,\sph^1)$, hence we can define its degree.
\begin{defn}\label{defdeg} Given $u\in \dot H^{\frac12}(\R,\sph^1)$ such that $u\equiv 1$ on $\R\setminus (I_-\cup I_+)$ we define
$$\deg(u,I_\pm):= \deg(P_{\pm} u),$$
where, being $P_{\pm} u\in \dot H^{\frac12}(\R,\sph^1)$,  $\deg(P_{\pm} u)$ is defined as in \eqref{defdeg1}.
\end{defn}

As one would expect, the sum of local degrees gives the global degree:

\begin{lem}\label{add} For $u\in \dot H^\frac12(\R,\sph^1)$ with $u\equiv 1$ outside $I_-\cup I_+$, we have 
$$\deg(u) =\deg(u,I_+)+\deg(u,I_-).$$
\end{lem}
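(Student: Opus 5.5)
The plan is to reduce the statement to Lemma \ref{lem:degree-additivity-product} through a pointwise factorization. Since $u\equiv 1$ outside $I_-\cup I_+$ and the intervals $I_-$ and $I_+$ are disjoint, we claim that
\[
u = (P_+u)(P_-u) \quad \text{a.e. on } \R .
\]
We verify this by cases. For $x\in I_+$ the right-hand side equals $u(x)\cdot 1=u(x)$. For $x\in I_-$ it equals $1\cdot u(x)=u(x)$. For $x\notin I_-\cup I_+$ both factors equal $1$, and so does $u(x)$.

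Both factors belong to $\dot H^{\frac12}(\R,\sph^1)$, as stated before Definition \ref{defdeg} via Lemma \ref{Tpm}. We may therefore apply Lemma \ref{lem:degree-additivity-product} with $P_+u$ and $P_-u$ in place of $u$ and $v$. This gives
\[
\deg(u)=\deg(P_+u)+\deg(P_-u)=\deg(u,I_+)+\deg(u,I_-),
\]
by Definition \ref{defdeg}.

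The only point needing care is the membership $P_\pm u\in\dot H^{\frac12}(\R,\sph^1)$, i.e.\ the applicability of Lemma \ref{Tpm} to $f=u-1$. This is the step where the geometry enters: the cut-off multiplier that localizes to $I_\pm$ must preserve $\dot H^{\frac12}$ for functions vanishing on the gap $(-a,a)$ and outside $(-1,1)$. That holds because $f$ already vanishes on a neighbourhood of the gap. Once this membership is granted, the proof reduces entirely to the multiplicative additivity of the degree.
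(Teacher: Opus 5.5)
Your proof is correct and is exactly the paper's argument: you factor $u=(P_+u)(P_-u)$ and apply Lemma \ref{lem:degree-additivity-product}. One minor point: $f=u-1$ vanishes on the gap $[-a,a]$ but not on a neighbourhood of it, and what Lemma \ref{Tpm} actually uses is the positive separation $\mathrm{dist}(I_+,I_-)=2a>0$; since you rely on that lemma anyway, this does not affect your proof.
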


\begin{proof} We can write $u=P_+u P_-u$, and the lemma follows from Lemma \ref{lem:degree-additivity-product}. 
\end{proof}

Now, given $k_+,k_-\in \mathbb{Z}$, we set
$$\mathcal{E}_{k_+,k_-,1}=\mathcal{E}_{k_+,k_-,1}{(a)}:=\{u\in \dot H^{\frac12}(\R,\sph^1): u\equiv 1 \text{ in }\R\setminus (I_-(a)\cup I_+(a)),\, \deg(u, I_\pm(a))=k_{\pm}\}.$$

Using the product structure on $\sph^1$
we can also give the following relative definition of local degree. Consider $g,u \in \dot H^{\frac12}(\R,\sph^1)$ and assume that $u=g$ a.e. in $\R\setminus (I_-\cup I_+)$. Then, by Lemma \ref{product}, $v:=u\bar g\in\dot H^{\frac12}(\R,\sph^1)$.  
 Moreover $v\equiv 1$ in $\R\setminus (I_-\cup I_+)$. { Then, following Definition \ref{defdeg}:
\begin{defn} Given $g,u \in \dot H^{\frac12}(\R,\sph^1)$ with $u=g$ a.e. in $\R\setminus (I_-\cup I_+)$, we set
\[
\deg_g(u,I_\pm):=\deg(u\bar g,I_\pm)=\deg(P_{\pm} (u\bar g)),
\]
and for $k_\pm\in\mathbb Z$, we set
\[
\mathcal E_{k_+,k_-,g}:=
\left\{u\in \dot H^{\frac12}(\R,\sph^1):u=g\text{ in }\R\setminus (I_-\cup I_+),
\ \deg_g(u,I_\pm)=k_\pm\right\}
=g\,\mathcal E_{k_+,k_-,1}.
\]
\end{defn}}


\subsection{Degree jumps}

We now want to estimate the cost of degree jumps. Let us start recalling from \cite{HM}:
\begin{prop}[Prop. 3.1 in \cite{HM}]\label{p:jump} Let $(u_n)\subset \mathcal{E}_k:=\{v\in \dot H^{\frac12}(\R,\sph^1):\deg(v)=k\}$, converge weakly in $\dot H^{\frac{1}{2},2}(\R,\sph^1)$ to $u\in \mathcal{E}_{\ell}$. Then $$E(u)\le \liminf_{n\to\infty} E(u_n)-2\pi |k-\ell|.$$
\end{prop}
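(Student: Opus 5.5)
The claim is Prop. 3.1 of \cite{HM}: weak limits lose at least $2\pi$ of energy per unit of degree. I would prove it by working on the extension side and combining a Brezis--Lieb type splitting with a lower bound of the form $E\ge 2\pi|\deg|$.

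\emph{Step 1 (energy splitting).} Write $E(u_n)=\|(-\Delta)^{1/4}u_n\|_{L^2}^2$. Set $w_n:=u_n-u$, which converges weakly to $0$ in $\dot H^{\frac12}$. By weak convergence,
\[
E(u_n)=E(u)+\|(-\Delta)^{1/4}w_n\|_{L^2}^2+o(1).
\]
It therefore suffices to show $\liminf_n \|(-\Delta)^{1/4}w_n\|_{L^2}^2\ge 2\pi|k-\ell|$.

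\emph{Step 2 (degree splitting).} The degree formula \eqref{defdeg1} is a quadratic form $Q(u,u)$, with $Q(a,b)=\frac1{2\pi i}\int \mathcal H[(-\Delta)^{1/4}\bar a](-\Delta)^{1/4}b$. Since $\mathcal H$ is an isometry of $L^2$, $Q$ is bounded on $\dot H^{\frac12}$. Expanding $Q(u+w_n,u+w_n)$ gives $Q(u,u)+Q(w_n,w_n)$ plus cross terms, and the cross terms vanish in the limit because $w_n\rightharpoonup0$ while $u$ is fixed. Hence
\[
Q(w_n,w_n)\to k-\ell .
\]

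\emph{Step 3 (Wente/isoperimetric bound).} I need $\|(-\Delta)^{1/4}w\|^2\ge 2\pi |Q(w,w)|$, at least asymptotically. The clean form comes from the harmonic extension. For $\sph^1$-valued maps, $\deg u=\frac1\pi\int_{\R^2_+}\det\nabla\tilde u$, because the Jacobian of the extension integrates to $\pi\deg$ (area of the disk times the degree). Pointwise $2|\det\nabla\tilde u|\le|\nabla\tilde u|^2$, so $2\pi|\deg|\le E$.

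The Jacobian is a null Lagrangian, so the decomposition of Step 2 can be done at the level of $\det\nabla$: $\int\det\nabla\tilde u_n=\int\det\nabla\tilde u+\int\det\nabla\tilde w_n+o(1)$, using weak convergence of the cross terms of the Jacobian. Here I would use that $\int\det(\nabla \tilde a,\nabla\tilde b)$ is the bounded bilinear form matching $Q$, which is what makes the cross terms disappear. Then
\[
\int|\nabla\tilde w_n|^2\ge 2\Big|\int\det\nabla\tilde w_n\Big|=2\pi|k-\ell|+o(1),
\]
and combining with Step 1 gives the claim.

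\emph{Main obstacle.} The delicate point is that $\int_{\R^2_+}\det\nabla\tilde w$ equals $\pi Q(w,w)$ for general complex $w$ with no modulus constraint; the identity with the degree uses $|u|=1$ only for $u$ itself. The bilinear identity $\int\det(\nabla\tilde a,\nabla\tilde b)=\pi\,\mathrm{Re}\,Q(a,b)$, up to a sign convention, should follow by integration by parts, $\det\nabla\tilde a=\partial_x(\tilde a^1\partial_y\tilde a^2)-\partial_y(\tilde a^1\partial_x\tilde a^2)$, giving the boundary term $\int a^1(-\Delta)^{1/2}$-type expressions via the Dirichlet-to-Neumann map, which is $\mathcal H\partial_x$. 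I would verify this on Schwartz functions and extend by density. Provided the identity holds, the rest is the routine weak-limit bookkeeping above.
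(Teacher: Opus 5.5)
Your argument is correct. The paper does not reprove this proposition; it quotes it from \cite{HM}, remarking only that the proof there rests on the nonlocal formula \eqref{defdeg1}. Your Steps 1 and 2 are that argument. With $w_n=u_n-u\rightharpoonup 0$, both $E$ and the quadratic form $Q$ split up to $o(1)$, so $E(u_n)=E(u)+E(w_n)+o(1)$ and $Q(w_n,w_n)\to k-\ell$.

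Where you diverge is Step 3, and there the detour is unnecessary. The bound you need follows in one line from the observation you already made in Step 2. Write $f=(-\Delta)^{\frac14}w$. Cauchy--Schwarz and $\|\mathcal H\|_{L^2\to L^2}=1$ give $2\pi|Q(w,w)|=\bigl|\int_\R \mathcal H[\bar f]\,f\,dx\bigr|\le \|f\|_{L^2}^2=E(w)$ for every complex-valued $w$, with no modulus constraint. Hence $\liminf_n E(w_n)\ge 2\pi|k-\ell|$, and what you flag as the main obstacle never has to be faced.

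Your extension route is nevertheless valid; the identity you worried about does hold. Integrating $\det\nabla\tilde a=\partial_x(\tilde a^1\partial_y\tilde a^2)-\partial_y(\tilde a^1\partial_x\tilde a^2)$ over $\R^2_+$ gives $\int_{\R^2_+}\det\nabla\tilde a=\int_\R a^1\partial_x a^2\,dx$. Since $(-\Delta)^{\frac12}=\mathcal H\partial_x$, i.e.\ $\partial_x=-\mathcal H(-\Delta)^{\frac12}$, this equals $\int_\R \mathcal H[f^1]\,f^2\,dx=\pi Q(a,a)$, where $f^j=(-\Delta)^{\frac14}a^j$. The identity holds for every complex $a$: first on Schwartz functions, then by density, because both sides are bounded quadratic forms on $\dot H^{\frac12}$. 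It also gives $\deg u=\frac1\pi\int_{\R^2_+}\det\nabla\tilde u$ for $\sph^1$-valued $u$ directly from \eqref{defdeg1}, with no separate area-formula argument.

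What your route buys is the geometric picture: the degree is the signed area swept by $\tilde u$, and the pointwise inequality $2|\det\nabla\tilde u|\le|\nabla\tilde u|^2$ is the mechanism behind the cost of $2\pi$ per unit of degree, exactly as in bubbling for harmonic maps. The purely Fourier-side argument buys brevity and avoids the boundary integration by parts and the density step altogether.
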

In Proposition \ref{p:jump} the degree jump is global. We now consider a sequence of maps  $(u_n)\subset \mathcal{E}_{k_+,k_-,1}$ whose degrees jump to $(\ell_+,\ell_-)$ independently on $I_+$ and $I_-$. We show that, even if the total degree might not jump ($k_+ + k_- = \ell_+ +\ell_-$), so that Proposition \ref{p:jump} does not imply any cost, the jump has indeed a cost of $2\pi |k_+-\ell_+|+2\pi |k_--\ell_-|$, as we are now going to show.

\begin{prop}[Local degree jumps]\label{ldegjump}
Let $g\in \dot H^{\frac12}(\mathbb R,\sph^1)$. Assume that
$ u_n \rightharpoonup u$
weakly in  $\dot H^{\frac12}(\mathbb R,\sph^1)$,
with $u_n\in \mathcal E_{k_+,k_-,g}$,
   $u\in \mathcal E_{\ell_+,\ell_-,g}$.
Then
\[
        E(u)
        \le
        \liminf_{n\to\infty} E(u_n)
        -
        2\pi\bigl(|k_+-\ell_+|+|k_- - \ell_-|\bigr).
\]
\end{prop}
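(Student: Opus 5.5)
The idea is to split the degree jump into two separate jumps, one on each interval, and apply the global Proposition \ref{p:jump} to each. The two pieces interact only through a cross term, which vanishes in the limit because $I_+$ and $I_-$ are at positive distance $2a$.

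\emph{Step 1: Hilbert-space splitting.} Set $f_n:=u_n-u\in\dot H^{\frac12}(\R,\mathbb C)$. Then $f_n\rightharpoonup 0$ weakly, and $f_n$ is supported in $\overline{I_-\cup I_+}$. Since $E$ is a quadratic form, expanding $E(u+f_n)$ gives
\[
E(u_n)=E(u)+E(f_n)+o(1).
\]
We split $f_n=f_n^++f_n^-$ with $f_n^\pm:=f_n\chi_{I_\pm}$. Define
\[
U_n^\pm:=u+f_n^\pm ,
\]
which equals $u_n$ on $I_\pm$ and $u$ elsewhere. Writing $U_n^\pm$ through products such as $u_n\,P_\mp(u\bar u_n)$ and using Lemma \ref{Tpm} together with Lemma \ref{product}, we get $U_n^\pm\in\dot H^{\frac12}(\R,\sph^1)$ with uniformly bounded energy. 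Hence $f_n^\pm$ is bounded in $\dot H^{\frac12}$. It also tends to $0$ in $L^2$, by Rellich on the bounded interval, since $f_n\to0$ in $L^2_{loc}$. Therefore $f_n^\pm\rightharpoonup 0$, i.e. $U_n^\pm\rightharpoonup u$.

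\emph{Step 2: degrees of $U_n^\pm$ and the global jump.} We compute the global degree using relative degrees. By construction $U_n^+\bar g$ agrees with $u_n\bar g$ on $I_+$ and with $u\bar g$ on $I_-$. Lemma \ref{add} applied to $U_n^+\bar g$, $u_n\bar g$ and $u\bar g$, together with Lemma \ref{lem:degree-additivity-product}, gives
\[
\deg(U_n^+)=\deg(g)+k_++\ell_-,\qquad \deg(u)=\deg(g)+\ell_++\ell_- .
\]
Thus $\deg(U_n^+)-\deg(u)=k_+-\ell_+$, and similarly $\deg(U_n^-)-\deg(u)=k_--\ell_-$. Proposition \ref{p:jump} then yields
\[
E(u)\le\liminf_n E(U_n^\pm)-2\pi|k_\pm-\ell_\pm| .
\]
Applying the expansion of Step 1 to $U_n^\pm=u+f_n^\pm$, this becomes
\[
\liminf_n E(f_n^\pm)\ge 2\pi|k_\pm-\ell_\pm| .
\]

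\emph{Step 3: the cross term.} It remains to show $E(f_n)=E(f_n^+)+E(f_n^-)+o(1)$. Using the double-integral form \eqref{12energybis}, the cross term equals a constant times
\[
\mathrm{Re}\int_{I_+}\int_{I_-}\frac{f_n^+(x)\overline{f_n^-(y)}}{|x-y|^2}\,dy\,dx .
\]
The other contributions cancel because the supports of $f_n^+$ and $f_n^-$ are disjoint. On $I_+\times I_-$ the kernel is bounded by $(2a)^{-2}$. Since $f_n^\pm\to0$ in $L^2$ on bounded intervals, this term tends to $0$.

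Combining the three steps,
\[
\liminf_n E(u_n)\ge E(u)+\liminf_n E(f_n^+)+\liminf_n E(f_n^-)\ge E(u)+2\pi\bigl(|k_+-\ell_+|+|k_--\ell_-|\bigr),
\]
which is the claim.

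The step I expect to need the most care is Step 1, namely checking that $U_n^\pm$ lies in $\dot H^{\frac12}(\R,\sph^1)$ with bounded energy. Cutting along $\partial I_\pm$ could produce jumps, but this is harmless here: at those points $u_n$ and $u$ both equal $g$ outside the intervals, so the product representation handles it. The degree bookkeeping in Step 2 is routine once that is in place.
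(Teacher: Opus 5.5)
Your proof is correct, but it splits relative to a different reference map than the paper does.

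\textbf{The paper's route.} It splits relative to the boundary datum. With $h_n=u_n-g$ and $h=u-g$, it sets $u_{n,\pm}=g+T_\pm h_n$ and $u_\pm=g+T_\pm h$. It applies Proposition \ref{p:jump} to $u_{n,\pm}\rightharpoonup u_\pm$; the degrees $\deg(g)+k_\pm$ and $\deg(g)+\ell_\pm$ are read off directly from $P_\pm(u_n\bar g)$ and $P_\pm(u\bar g)$. It then uses the exact identity $E(u_n)=E(u_{n,+})+E(u_{n,-})-E(g)+2\langle h_{n,+},h_{n,-}\rangle_{\dot H^{\frac12}}$ and its analogue for $u$. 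There the cross term converges to $\langle h_+,h_-\rangle_{\dot H^{\frac12}}$, which is in general nonzero.

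\textbf{Your route.} You split the defect $f_n=u_n-u$ relative to the weak limit, a Brezis--Lieb type decomposition. The cost is the mixed degree computation $\deg(U_n^\pm)=\deg(g)+k_\pm+\ell_\mp$, obtained from Lemma \ref{add} applied to $U_n^\pm\bar g$. The gain is that your cross term tends to $0$. You also get sharper, localized information: the energy lost in the limit splits as $E(f_n^+)+E(f_n^-)+o(1)$, and each piece carries at least $2\pi|k_\pm-\ell_\pm|$ on its own.

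Both arguments use the same ingredients: Lemma \ref{Tpm}, Rellich compactness on $I_-\cup I_+$, the bounded kernel on $I_+\times I_-$, Proposition \ref{p:jump}, and degree additivity.

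\textbf{A simplification for your Step 1.} You need neither the product representation $u_nP_\mp(u\bar u_n)$ nor Rellich to get boundedness and $f_n^\pm\rightharpoonup 0$. Since $f_n$ is bounded in $X$ and $f_n^\pm=T_\pm f_n$, this follows because bounded linear operators are weakly continuous. Compactness is needed only for the cross term in Step 3.
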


Since the proof of Proposition \ref{p:jump} is based on a nonlocal definition of degree, its proof does not extend easily to Proposition \ref{ldegjump}. We will need to introduce suitable projection operators and prove their continuity.

\medskip

Consider the closed linear subspace $X$ of $H^{\frac12}(\mathbb R,\mathbb{C})$ defined as
\begin{equation}\label{defX}
X:=\bigl\{f\in H^{\frac12}(\mathbb R,\mathbb{C}): \supp f \subset {\overline{I_+\cup I_-}}\bigr\}.
\end{equation}
By the fractional Poincar\'e inequality, the seminorm $[\,\cdot\,]_{\dot H^{\frac12}}$
is a norm on $X$, equivalent to the full $H^{\frac12}$-norm. In particular,
$X$ is a Hilbert space with the scalar product

\begin{equation}\label{scalarprod}
\langle f_1,f_2\rangle_{\dot H^{\frac12}}
:=
\frac{1}{2\pi}
\int_{\mathbb R}\int_{\mathbb R}
\frac{(f_1(x)-f_1(y))\cdot(f_2(x)-f_2(y))}
{|x-y|^2}\,dx\,dy.
\end{equation}

\begin{lem}\label{Tpm}
Let $X$ be as in \eqref{defX}. Then the operators $T_\pm : X\to X$ given by
\[
T_\pm f(x):=
\begin{cases}
f(x), & x\in I_\pm\\
0, & x\notin I_\pm
\end{cases}
\]
are bounded.
\end{lem}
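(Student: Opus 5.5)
We will prove that $T_+$ is bounded; the argument for $T_-$ is identical by symmetry, and in any case $T_-=\mathrm{Id}-T_+$ on $X$. Fix $f\in X$, so $f=0$ a.e. outside $\overline{I_+\cup I_-}$. The key geometric fact is that $I_+$ and $I_-$ are at positive distance: $\operatorname{dist}(I_+,I_-)=2a>0$. We split the double integral defining $[T_+f]_{\dot H^{\frac12}}^2$ according to whether $x,y$ lie in $I_+$ or in its complement $I_+^c$.

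The pair $(x,y)\in I_+\times I_+$ contributes exactly the same amount as in $[f]^2_{\dot H^{\frac12}}$, since $T_+f=f$ there. The pair $(x,y)\in I_+^c\times I_+^c$ contributes zero. It remains to treat the mixed terms (counted twice by symmetry):
\[
\int_{I_+}|f(x)|^2\int_{I_+^c}\frac{dy}{|x-y|^2}\,dx .
\]
We write $I_+^c=(\R\setminus \overline{I_+\cup I_-})\cup I_-$, up to null sets. On the first piece $f(y)=0$, so
\[
\int_{I_+}|f(x)|^2\int_{\R\setminus \overline{I_+\cup I_-}}\frac{dy}{|x-y|^2}\,dx=\int_{I_+}\int_{\R\setminus \overline{I_+\cup I_-}}\frac{|f(x)-f(y)|^2}{|x-y|^2}\,dy\,dx,
\]
which is bounded by $[f]^2_{\dot H^{\frac12}}$ up to the normalising constant. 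On the second piece, $|x-y|\ge 2a$ for $x\in I_+$ and $y\in I_-$, so
\[
\int_{I_+}|f(x)|^2\int_{I_-}\frac{dy}{|x-y|^2}\,dx\le \frac{|I_-|}{4a^2}\|f\|_{L^2}^2 .
\]
The fractional Poincar\'e inequality on $X$, recalled before the statement, then gives $\|f\|_{L^2}\le C\,[f]_{\dot H^{\frac12}}$. Summing the three contributions yields
\[
[T_+f]_{\dot H^{\frac12}}\le C(a)\,[f]_{\dot H^{\frac12}} .
\]

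Two points remain to check. First, $T_+f$ is again in $X$: its support lies in $\overline{I_+}$, and it lies in $L^2$ with $\|T_+f\|_{L^2}\le\|f\|_{L^2}$. Second, linearity of $T_+$ is clear, so the seminorm bound above is exactly boundedness.

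The main subtlety is the mixed term against the complement. The weight $\int_{I_+^c}|x-y|^{-2}\,dy$ blows up like $\operatorname{dist}(x,\partial I_+)^{-1}$, so we cannot simply bound it by an $L^2$ norm of $f$. It is controlled only because $f$ vanishes across the outer boundary points $1$ and $a$ on the side away from $I_-$, which turns this term into part of the original seminorm. The remaining part, near $I_-$, is harmless because of the gap of length $2a$. The resulting constant $C(a)$ blows up like $a^{-2}$ as $a\to0$, but this is irrelevant here since $a$ is fixed.
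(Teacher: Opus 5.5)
Your proof is correct and takes essentially the same approach as the paper. You split the double integral by regions, use $\operatorname{dist}(I_+,I_-)=2a$ to control the $I_+\times I_-$ interaction by an $L^2$ norm, and close with the fractional Poincar\'e inequality. The only cosmetic difference is that you bound that term directly by $|f(x)|^2/(4a^2)$, whereas the paper compares it to $|f(x)-f(y)|^2$ plus a cross term $2|f(x)||f(y)|$ and then uses $L^1$ norms.
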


\begin{proof}
For $f\in X$ one has
\[
[T_+f]_{\dot H^{\frac12}}^2
\le [f]_{\dot H^{\frac12}}^2
   +4\iint_{I_+\times I_-}\frac{|f(x)||f(y)|}{|x-y|^2}\,dx\,dy.
\]
Since $\mathrm{dist}(I_+,I_-)=2a:=d>0$, we have $|x-y|\ge d$ on $I_+\times I_-$, hence
\[
\iint_{I_+\times I_-}\frac{|f(x)||f(y)|}{|x-y|^2}\,dx\,dy
\le \frac{1}{d^2}\|f\|_{L^1(I_+)}\|f\|_{L^1(I_-)}
\le C\|f\|_{L^2(I_+\cup I_-)}^2.
\]
Since $I_+\cup I_-$ is bounded and $f$ vanishes outside it, the fractional Poincar\'e inequality yields
\[
\|f\|_{L^2(I_+\cup I_-)}^2 \le C [f]_{\dot H^{\frac12}}^2.
\]
Therefore
\[
[T_+f]_{\dot H^{\frac12}} \le C [f]_{\dot H^{\frac12}}.
\]
The same argument applies to $T_-$.
\end{proof}

\medskip

\noindent\emph{Proof of Proposition \ref{ldegjump}.}
Set
\[
        h_n:=u_n-g,
        \qquad
        h:=u-g.
\]
Let $X$ be as in \eqref{defX} and $T_\pm:X\to X$ be as in Lemma \ref{Tpm}. Since $u_n=u=g$ in $\mathbb R\setminus (I_-\cup I_+)$, we have $h_n,h\in X$. 
Define
\[
        h_{n,\pm}:=T_\pm h_n,
        \qquad
        h_\pm:=T_\pm h,
\]
and
\[
        u_{n,\pm}:=g+h_{n,\pm},
        \qquad
        u_\pm:=g+h_\pm.
\]
Since $T_\pm$ is bounded and $h_n \rightharpoonup h$ weakly in $X$, we get $h_{n,\pm} \rightharpoonup h_{\pm}$ weakly in $X$, hence
\[
        u_{n,\pm}\rightharpoonup u_\pm
        \qquad\text{weakly in } \dot H^{\frac12}(\mathbb R,\mathbb C).
\]
Also $u_{n,\pm},u_\pm$ are $\sph^1$-valued, because
$u_{n,\pm}=u_n$ in $I_\pm$ and $u_{n,\pm}=g$ in
$\mathbb R\setminus I_\pm$, and similarly for $u_\pm$.  

We claim that
\[
        \deg(u_{n,\pm})=\deg(g)+k_\pm,
        \qquad
        \deg(u_\pm)=\deg(g)+\ell_\pm.
\]
Indeed,
\[
        u_{n,\pm}\bar g
        =
        \begin{cases}
        u_n\bar g & \text{in } I_\pm,\\
        1 & \text{in } \mathbb R\setminus I_\pm,
        \end{cases}
        =
        P_\pm(u_n\bar g),
\]
and therefore
\[
        \deg(u_{n,\pm}\bar g)=\deg_g(u_n,I_\pm)=k_\pm.
\]
By Lemma \ref{lem:degree-additivity-product},
\[
        \deg(u_{n,\pm})
        =
        \deg(g)+\deg(u_{n,\pm}\bar g)
        =
        \deg(g)+k_\pm.
\]
The proof for $u_\pm$ is identical.

Applying Proposition \ref{p:jump} to the sequences
$(u_{n,+})$ and $(u_{n,-})$, we get
\[
        E(u_+)
        \le
        \liminf_{n\to\infty}E(u_{n,+})
        -
        2\pi |k_+-\ell_+|,
\]
and
\[
        E(u_-)
        \le
        \liminf_{n\to\infty}E(u_{n,-})
        -
        2\pi |k_- -\ell_-|.
\]
Hence
\[
        E(u_+)+E(u_-)
        \le
        \liminf_{n\to\infty}
        \bigl(E(u_{n,+})+E(u_{n,-})\bigr)
        -
        2\pi\bigl(|k_+-\ell_+|+|k_- - \ell_-|\bigr).
\]

It remains to compare the localized energies with the full energy. We first extend $E$ to $\dot H^{\frac12}(\mathbb R,\mathbb C)$ by the same
quadratic expression and for $f_1,f_2\in \dot H^{\frac12}(\mathbb R,\mathbb C)$ we consider $\langle f_1,f_2\rangle_{\dot H^{\frac12}}$ defined as in \eqref{scalarprod}.
In particular,
\[
E(f)=\langle f,f\rangle_{\dot H^{\frac12}}
=\frac{1}{2\pi}[f]^2_{\dot H^{\frac12}}.
\]
Since
\[
u_n=g+h_{n,+}+h_{n,-},
\qquad
u_{n,\pm}=g+h_{n,\pm},
\]
bilinearity gives
\[
E(u_n)
=
E(u_{n,+})+E(u_{n,-})-E(g)
+
2\langle h_{n,+},h_{n,-}\rangle_{\dot H^{\frac12}}.
\]
The same identity holds for $u$, $u_+$ and $u_-$:
\[   E(u)  = E(u_+)+E(u_-)-E(g) + 2\langle h_+,h_-\rangle_{\dot H^{\frac12}}.\]
Since the sequence
$(h_n)$ is bounded in $X$ and supported in $I_-\cup I_+$, by the equivalence of the $\dot H^{\frac12}$ and
$H^{\frac12}$ norms on $X$ and by the compact embedding
$
H^{\frac12}(I_-\cup I_+)\hookrightarrow L^2(I_-\cup I_+),
$
we have
$
h_n\to h$ in $L^2(I_-\cup I_+)$.
Thus
\[
        h_{n,\pm}\to h_\pm
        \qquad\text{in } L^2(I_\pm).
\]
For $\operatorname{supp} f_1\subset \overline{I_+}$ and
$\operatorname{supp} f_2\subset \overline{I_-}$,  
\[
        \langle f_1,f_2\rangle_{\dot H^{\frac12}}
        =
        -\frac{1}{\pi}
        \int_{I_+}\int_{I_-}
        \frac{f_1(x)\cdot f_2(y)}{|x-y|^2}\,dx\,dy .
\]
Since $\operatorname{dist}(I_+,I_-)>0$, the kernel $|x-y|^{-2}$ is bounded
on $I_+\times I_-$, and the bilinear form $\langle \cdot , \cdot \rangle_{\dot H^{\frac12}}$ is continuous with respect to the strong $L^2(I_+)\times L^2(I_-)$ convergence. Hence
\[
        \langle h_{n,+},h_{n,-}\rangle_{\dot H^{\frac12}}
        \longrightarrow
        \langle h_+,h_-\rangle_{\dot H^{\frac12}}.
\]
Combining the previous estimates, we obtain
\[
\begin{aligned}
        E(u)
        &=
        E(u_+)+E(u_-)-E(g)
        +
        2\langle h_+,h_-\rangle_{\dot H^{\frac12}}
        \\
        &\le
        \liminf_{n\to\infty}
        \bigl(E(u_{n,+})+E(u_{n,-})\bigr)
        -
        2\pi\bigl(|k_+-\ell_+|+|k_- - \ell_-|\bigr)
        \\
        &\qquad
        -E(g)
        +
        2\lim_{n\to\infty}
        \langle h_{n,+},h_{n,-}\rangle_{\dot H^{\frac12}}
        \\
        &=
        \liminf_{n\to\infty}
        \Bigl(
        E(u_{n,+})+E(u_{n,-})-E(g)
        +
        2\langle h_{n,+},h_{n,-}\rangle_{\dot H^{\frac12}}
        \Bigr)
        \\
        &\qquad
        -
        2\pi\bigl(|k_+-\ell_+|+|k_- - \ell_-|\bigr)
        \\
        &=
        \liminf_{n\to\infty}E(u_n)
        -
        2\pi\bigl(|k_+-\ell_+|+|k_- - \ell_-|\bigr).
\end{aligned}
\]
This proves the proposition. \hfill$\square$

\section{Proof of Theorems \ref{exnonconst} and \ref{exk}}

Theorem \ref{exnonconst} follows from Theorem \ref{exk} by taking $g\equiv 1$, in which case the minimizer in $\mathcal E_{0,0,1}$ is the constant map $1$, while the minimizers in $\mathcal E_{-j,j,1}$, $1\le j\le k$, are non-constant and distinct because their local degrees are distinct and different from zero.

We will then focus on proving Theorem \ref{exk}.

\begin{lem}\label{lem:loglog-transition}
For $j,m\in \mathbb N$  set
\[
\rho_m:=\exp(-e^{2\pi m}),\qquad
\delta_{m,j}:=\rho_{m+j}=\exp(-e^{2\pi(m+j)}),
\]
and define
\[
\varphi_{m,j}(x):=
\begin{cases}
2\pi j,& |x|\le \delta_{m,j},\\[2mm]
\log\log\dfrac1{|x|} - 2\pi m,& \delta_{m,j}<|x|<\rho_m,\\[2mm]
0,& |x|\ge \rho_m.
\end{cases}
\]
Then
\[
[e^{i\varphi_{m,j}}]_{\dot H^{\frac12}}\le [\varphi_{m,j}]_{\dot H^{\frac12}}\longrightarrow0
\qquad\text{as }m\to\infty,
\]
for every fixed $j$.
\end{lem}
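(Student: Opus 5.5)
The plan has two steps. First, the inequality $[e^{i\varphi}]\le[\varphi]$ is immediate from the pointwise bound $|e^{i s}-e^{it}|\le |s-t|$ for real $s,t$, applied inside the double integral. The real content is showing $[\varphi_{m,j}]_{\dot H^{\frac12}}\to0$. For this I would avoid estimating the Gagliardo double integral directly and instead use the harmonic-extension characterization \eqref{12energybis}. Since the Poisson extension minimizes the Dirichlet energy among all $H^1$ extensions with the same trace, we have
\[
\frac{1}{2\pi}[\varphi]^2_{\dot H^{\frac12}}\le \int_{\R^2_+}|\nabla \Phi|^2
\]
for any extension $\Phi$ of $\varphi$.

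I take the radial extension $\Phi(z):=\psi(|z|)$, where
\[
\psi(r)=\begin{cases}2\pi j,& r\le\delta_{m,j},\\ \log\log\frac1r-2\pi m,&\delta_{m,j}<r<\rho_m,\\ 0,& r\ge\rho_m.\end{cases}
\]
The trace of $\Phi$ on $\R\times\{0\}$ is exactly $\varphi_{m,j}$. Moreover $\psi$ is continuous, because $\log\log(1/\rho_m)=2\pi m$ and $\log\log(1/\delta_{m,j})=2\pi(m+j)$; this is precisely why the radii were chosen doubly exponential. So $\Phi$ is Lipschitz away from the origin and bounded, hence it lies in $H^1_{loc}$ with finite energy.

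Next I compute the energy in polar coordinates on the half plane. With $\psi'(r)=-\frac{1}{r\log(1/r)}$,
\[
\int_{\R^2_+}|\nabla\Phi|^2=\pi\int_{\delta_{m,j}}^{\rho_m}\frac{dr}{r\log^2(1/r)}.
\]
Substituting $s=\log(1/r)$, this equals
\[
\pi\int_{e^{2\pi m}}^{e^{2\pi(m+j)}}\frac{ds}{s^2}=\pi\bigl(e^{-2\pi m}-e^{-2\pi(m+j)}\bigr)\le \pi e^{-2\pi m}\to0 .
\]
Hence $[\varphi_{m,j}]^2_{\dot H^{\frac12}}\le 2\pi^2e^{-2\pi m}\to0$ as $m\to\infty$, uniformly in $j$ in fact.

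The main point needing care is the justification that the Gagliardo seminorm of the trace is controlled by the Dirichlet energy of an arbitrary finite-energy extension, not just the Poisson one. This is the standard minimality of the harmonic extension. Equivalently, one can use the trace inequality $[\Phi|_{\R}]^2_{\dot H^{\frac12}}\le 2\pi\int_{\R^2_+}|\nabla\Phi|^2$, which follows from \eqref{12energybis} together with the Dirichlet principle, because $\Phi-\tilde\varphi$ has zero trace.

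One also checks that $\varphi_{m,j}$ is a compactly supported bounded function, so it belongs to $\dot H^{\frac12}$ and the identities apply. If one prefers to avoid extensions entirely, the fallback is a direct estimate using the one-dimensional Lipschitz bound $|\psi'(r)|\le 1/(r\log(1/r))$ on dyadic shells. That route is messier, so I would use the extension argument.
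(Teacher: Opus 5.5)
Your proof is correct, and it takes a different route from the paper's.

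\textbf{The paper's route.} The paper works directly with the one-dimensional Gagliardo double integral. It splits $\R$ into $A_1=\{|x|\le\delta\}$, $A_2=\{\delta<|x|<\rho\}$ and $A_3=\{|x|\ge\rho\}$. It controls the $A_2\times A_2$ term by absolute continuity, using the cited fact that $\log\log(1/|x|)\in H^{\frac12}$. It bounds the three cross terms by rescaling $x=\delta X$ or $x=\rho X$ and using elementary estimates for $\log(1+t)$. This yields constants $C_j$ and only a qualitative limit.

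\textbf{Your route.} You use \eqref{12energybis} together with Dirichlet's principle, i.e.\ the trace inequality $[\Phi|_{\R}]^2_{\dot H^{\frac12}}\le 2\pi\int_{\R^2_+}|\nabla\Phi|^2$. For your compactly supported Lipschitz $\Phi$ this can be checked directly by Fourier transform in $x$, so the justification you flagged is not a gap. The radial competitor then reduces everything to an exact one-variable integral, and your computation $\pi\bigl(e^{-2\pi m}-e^{-2\pi(m+j)}\bigr)$ is right.

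\textbf{What each approach buys.} Yours gives an explicit rate, $[\varphi_{m,j}]^2_{\dot H^{\frac12}}\le 2\pi^2e^{-2\pi m}$, uniform in $j$. It also makes transparent why the radii are doubly exponential: this is the classical two-dimensional $\log\log$ capacity computation. As a byproduct it recovers the membership of $\log\log(1/|x|)$ in $H^{\frac12}$ that the paper imports. The paper's argument, in turn, stays entirely within the intrinsic one-dimensional seminorm and needs no extension or variational characterization.

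A minor remark: $\Phi$ is in fact Lipschitz on all of $\R^2$, not just away from the origin. This is because $\psi$ is constant on $[0,\delta_{m,j}]$ and $|\psi'|\le 1/\delta_{m,j}$ on the annulus.
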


\begin{proof}
The first inequality follows from
\[
|e^{i\varphi(x)}-e^{i\varphi(y)}|\le |\varphi(x)-\varphi(y)|.
\]
It remains to estimate $[\varphi_{m,j}]_{\dot H^{\frac12}}$. We write, for simplicity, $\rho=\rho_m$, $\delta=\delta_{m,j}$ and $\varphi=\varphi_{m,j}$, and decompose
\[
A_1:=\{|x|\le\delta\},\qquad
A_2:=\{\delta<|x|<\rho\},\qquad
A_3:=\{|x|\ge\rho\}.
\]
Then
\begin{align*}
[\varphi]_{\dot H^{\frac12}}^2
&=\iint_{A_2\times A_2}\frac{|\varphi(x)-\varphi(y)|^2}{|x-y|^2}\,dxdy
+2\iint_{A_1\times A_2}\frac{|\varphi(x)-\varphi(y)|^2}{|x-y|^2}\,dxdy\\
&\quad+2\iint_{A_1\times A_3}\frac{|\varphi(x)-\varphi(y)|^2}{|x-y|^2}\,dxdy
+2\iint_{A_2\times A_3}\frac{|\varphi(x)-\varphi(y)|^2}{|x-y|^2}\,dxdy\\
&=:J_1+J_2+J_3+J_4.
\end{align*}
Since $f(x)=\log\log(1/|x|)$ belongs to $H^{\frac12}((-1/2,1/2))$ (see \cite[Lemma A.9]{HM}), the absolute continuity of the integral gives $J_1\to0$ as $\rho\to0$.

For $J_2$ we scale $x=\delta X$, $y=\delta Y$. If $|X|\le1$ and $1\le |Y|\le \rho/\delta$, then
\[
\varphi(\delta X)-\varphi(\delta Y)
=\log\left(1+\frac{\log |Y|}{\log\delta}\right)^{-1},
\]
and hence, since $0\le \log |Y|\le \log(\rho/\delta)<-\log\delta$,
\[
|\varphi(\delta X)-\varphi(\delta Y)|
\le C_j\frac{\log |Y|}{|\log\delta|}.
\]
Therefore
\[
J_2\le
\frac{C_j}{(\log\delta)^2}
\int_{|X|\le1}\int_{1\le |Y|\le \rho/\delta}
\frac{(\log |Y|)^2}{|X-Y|^2}\,dYdX
\le \frac{C_j}{(\log\delta)^2}\to0.
\]
Here we used the elementary bound
\[
\sup_{R>1}\int_{|X|\le1}\int_{1\le |Y|\le R}
\frac{(\log |Y|)^2}{|X-Y|^2}\,dYdX<\infty,
\]
which follows by splitting the region into $||Y|-1|<1$ and its complement.

For $J_3$ we use $|\varphi(x)-\varphi(y)|=2\pi j$ on $A_1\times A_3$ and obtain
\[
J_3\le C_j\int_{|x|\le\delta}\int_{|y|\ge\rho}\frac{dy\,dx}{|x-y|^2}
\le C_j\frac{\delta}{\rho}\to0.
\]
Finally, for $J_4$ we scale $x=\rho X$, $y=\rho Y$. If $\delta/\rho<|X|<1$ and $|Y|\ge1$, then
\[
\varphi(\rho X)-\varphi(\rho Y)
=\log\left(1+\frac{\log |X|}{\log\rho}\right),
\]
and therefore
\[
|\varphi(\rho X)-\varphi(\rho Y)|
\le C_j\frac{|\log |X||}{|\log\rho|}.
\]
Thus
\[
J_4\le
\frac{C_j}{(\log\rho)^2}
\int_{|X|\le1}\int_{|Y|\ge1}
\frac{(\log |X|)^2}{|X-Y|^2}\,dYdX
\le \frac{C_j}{(\log\rho)^2}\to0.
\]
This proves the lemma.
\end{proof}

\medskip

\noindent\emph{Proof of Theorem \ref{exk} (completed).}
Fix $k\in\mathbb N$, $\Lambda\in [0,2\pi)$. By Lemma~\ref{lem:loglog-transition}, we may choose $m$ so large that, with
\[
 v_j(x):=e^{i\varphi_{m,j}(x)},\qquad 0\le j\le k,
\]
one has   $$E(v_j)=\frac{\([v_j]_{\dot H^\frac12}\)^2}{2\pi}<\(\sqrt{2\pi}-\sqrt{\Lambda}\)^2,\qquad 0\le j\le  k.$$
Set $a_0:=\delta_{m,k}$, as given in Lemma \ref{lem:loglog-transition}. Then, given $a\in (0,a_0]$, $v_j\in {\mathcal E_{-j,j,1}(a)}$ for every {$0\le j\le k$}.
Therefore,  given $g$ with $E(g)\leq\Lambda$,
Lemma~\ref{product} gives
\[
\sqrt{E(g v_j)}\le \sqrt{E(g)}+\sqrt{E( v_j)}<\sqrt{2\pi},
\]
so that
\begin{equation}\label{eq:upper-bounds-mj}
\inf_{\mathcal E_{-j,j,g}}E\le E(g v_j)<2\pi,
\qquad 0\le j\le k.
\end{equation}
{Fix now $j$ with $0\le j\le k$ and} define
\[
m_j:=\inf_{\mathcal E_{-j,j,g}}E.
\]
Let $(u_n)\subset \mathcal E_{-j,j,g}$ be a minimizing sequence.    Then up to a subsequence, $ u_n\rightharpoonup u^{(j)}\text{ in }\dot H^{\frac12}(\mathbb R,\sph^1)$ for some $u^{(j)}\in \dot H^{\frac12}(\mathbb R,\sph^1) $ with $u^{(j)}=g$ on $\R\setminus (I_-\cup I_+)$. 
Write
\[
(\ell_+,\ell_-)
:=\bigl(\deg_g(u^{(j)},I_+),\deg_g(u^{(j)},I_-)\bigr).
\]
Proposition~\ref{ldegjump} gives
\begin{equation}\label{eq:jump-general-j}
E(u^{(j)})
\le m_j-2\pi s_j,
\end{equation}
where
\[
s_j:=|\ell_+ +j|+|\ell_- -j|.
\]
By \eqref{eq:upper-bounds-mj}  we have $m_j<2\pi$. Since $E(u^{(j)})\ge0$, \eqref{eq:jump-general-j} implies $s_j=0$. Therefore
\[
(\ell_+,\ell_-)=(-j,j),
\]
so $u^{(j)}\in \mathcal E_{-j,j,g}$ and $E(u^{(j)})=m_j$. In particular $u^{(j)}$ minimizes $E$ in $\mathcal E_{-j,j,g}(a)$.

Since smooth compactly supported variations inside $(-1,-a)\cup(a,1)$ preserve both the boundary condition on $\mathbb R\setminus ((-1,-a)\cup(a,1))$ and the local degrees, $u^{(j)}$ is a critical point of $E$ with respect to arbitrary compactly supported variations in $(-1,-a)\cup(a,1)$. Hence $u^{(j)}$ is half-harmonic in $(-1,-a)\cup(a,1)$.

The maps $u^{(0)},\dots,u^{(k)}$ are pairwise distinct, because their local relative degrees
are all different. For $j\ge1$, the map $u^{(j)}$ is non-constant, again because its local relative degrees are non-zero. On the other hand, since $m_j=E(u^{(j)})<2\pi$, we have $\deg(u^{(j)})=0$.

This completes the proof of Theorem~\ref{exk}, {hence also of Theorem \ref{exnonconst} for arbitrary $k\ge 1$.} \hfill $\square$

\section{Proof of Theorems \ref{exist3}, \ref{thmBl} and \ref{Bl3}}

\subsection{Proof of Theorem \ref{exist3}.}
Let $u_0\in \mathcal{E}_g$ be the absolute minimizer, and set $\deg(u_0)=k_0$. By regularity theory (see \cite{DLR, Sch}, Lemma \ref{bdryreg}), $u_0|_{\bar I}\in C^0(\bar I)\cap C^\infty(I)$. Since $u_0$ is non-constant (see e.g. \cite[Lemma 4.3]{HM}), $u_0(-1)=u_0(1)$, and $\deg(u_0,I)=0$, there are points $x_+,x_-\in I$ such that $\mathrm{sgn}(u_0(x_\pm)\wedge u_0'(x_\pm) )=\pm 1$. By \cite[Lemma 4.1]{HM},
$$\inf_{\mathcal{E}_{g, k_0\pm 1}}E< \inf_{\mathcal{E}_g} E +2\pi,$$
hence, as in the proof of Theorem 1.1 of \cite{HM}, there are minimizers $u_{\pm}$ of $E$ restricted to $\mathcal{E}_{g, k_0\pm 1}$, and such minimizers are half-harmonic maps. \hfill$\square$

\subsection{Proof of Theorem \ref{thmBl}.}

\begin{lem} \label{l1} {Given $g\in \dot H^\frac{1}{2}(\R,\sph^1)$,} set
\[
m_{g,j}:=\inf_{\M{E}_{g,j}}E .
\]
Then, for every $j,\ell\in\mathbb Z$,
\[
m_{g,\ell}\leq m_{g,j}+2\pi|j-\ell|.
\]
\end{lem}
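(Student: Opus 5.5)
The plan is to take any $u\in\M{E}_{g,j}$ and glue in $|j-\ell|$ concentrated Möbius bubbles of energy close to $2\pi$, while keeping the result inside $\M{E}_{g,\ell}$. By symmetry we may assume $\ell>j$; the case $\ell<j$ follows by the same argument with conjugated bubbles, or by applying the result to $\bar u,\bar g$. By induction it is enough to treat $\ell=j+1$. Indeed, $m_{g,j+1}\le m_{g,j}+2\pi$ for all $j$, together with the analogous bound $m_{g,j-1}\le m_{g,j}+2\pi$, iterates to give the claim.

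Fix $\ve>0$ and $u\in\M{E}_{g,j}$ with $E(u)\le m_{g,j}+\ve$. The gluing step works as follows.

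\begin{itemize}
\item By density (e.g. \cite[Prop. A.2]{HM} combined with a cutoff that keeps the exterior datum), replace $u$ by a map that equals $g$ outside $I$, is smooth near some interior point $x_0\in I$, and has energy at most $m_{g,j}+2\ve$. The degree is unchanged, by continuity of the degree.
\item Further modify $u$ so that it is constant, say equal to $u(x_0)$, on a small interval $(x_0-r,x_0+r)\subset I$. This costs an arbitrarily small amount of energy, for instance via the $\log\log$ cutoff of Lemma \ref{lem:loglog-transition}, or simply because $u$ is smooth near $x_0$.
\item Insert the bubble. Let $b_\lambda(x)=\frac{x-x_0-i\lambda}{x-x_0+i\lambda}$, which has degree $1$ and energy $2\pi$. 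Define $w=u\,\bar u(x_0)\,\tilde b_\lambda$, where $\tilde b_\lambda$ equals $b_\lambda$ on $(x_0-r,x_0+r)$ and is joined to the constant $1$ outside. Concretely, $\tilde b_\lambda=b_\lambda\cdot e^{-i\psi_\lambda}$ for a lifting correction $\psi_\lambda$ supported in $|x-x_0|\ge r$ with $[e^{i\psi_\lambda}]_{\dot H^{\frac12}}\to0$ as $\lambda\to0$. This is possible because $b_\lambda\to1$ smoothly away from $x_0$, with phase $O(\lambda/|x-x_0|)$ there.
\end{itemize}

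Since $\tilde b_\lambda=1$ outside $I$, the map $w$ still equals $g$ outside $I$. Lemma \ref{lem:degree-additivity-product} then gives $\deg w=\deg u+1=j+1$, so $w\in\M{E}_{g,j+1}$.

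The main obstacle is the energy estimate $E(w)\le E(u)+2\pi+o(1)$. Lemma \ref{product} only gives the cruder bound $\sqrt{E(w)}\le\sqrt{E(u)}+\sqrt{2\pi}+o(1)$. To do better I would use the fact that $u\bar u(x_0)=1$ on $(x_0-r,x_0+r)$, where the energy of $\tilde b_\lambda$ concentrates, while $\tilde b_\lambda\to1$ strongly away from $x_0$. Expanding the double integral for $|u(x)\tilde b(x)-u(y)\tilde b(y)|^2$, one splits the domain into the regions where $x$ or $y$ lies in $(x_0-r/2,x_0+r/2)$ and the rest. The cross terms are of the form $\iint (u(x)-u(y))\cdot(\ldots)(\tilde b(x)-\tilde b(y))/|x-y|^2$. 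These vanish when both points lie in the interval, are controlled by $\|\tilde b_\lambda-1\|$ away from $x_0$ (which tends to $0$), and in the mixed region are bounded by $C\,r^{-1}\lambda$-type terms via $|x-y|\ge r/2$. Hence $E(w)\le E(u)+E(\tilde b_\lambda)+o(1)=E(u)+2\pi+o(1)$ as $\lambda\to0$.

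Letting $\lambda\to0$ and then $\ve\to0$ gives $m_{g,j+1}\le m_{g,j}+2\pi$, and the induction completes the proof.
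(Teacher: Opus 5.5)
Your argument is the same bubble-attachment construction the paper uses: its proof simply cites the construction in the proof of Theorem~1.3(iv) of \cite{HM}, which glues $|j-\ell|$ concentrated Blaschke factors onto an almost-minimizer at a cost of $2\pi+o(1)$ each. Your near/mixed/far splitting of the cross term does give $E(v\tilde b_\lambda)\le E(v)+E(\tilde b_\lambda)+o(1)$ as $\lambda\to0$, where $v=u\,\bar u(x_0)$.

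There is one concrete slip. With $w=u\,\bar u(x_0)\,\tilde b_\lambda$ you get $w=\bar u(x_0)\,g$ outside $I$. This is not $g$ unless $u(x_0)=1$, so as written $w\notin\M{E}_{g,j+1}$, and the sentence claiming $w$ still equals $g$ outside $I$ is false. The fix is immediate: take $w:=u\,\tilde b_\lambda$ instead.
\begin{itemize}
\item It equals $g$ outside $I$.
\item It has degree $j+1$ by Lemma~\ref{lem:degree-additivity-product}.
\item It satisfies $E(w)=E(u\,\bar u(x_0)\,\tilde b_\lambda)$, because $E$ is invariant under multiplication by a unimodular constant.
\end{itemize}
Your energy estimate only uses that $u\,\bar u(x_0)\equiv1$ near $x_0$, so it applies verbatim to this $w$.

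A minor point concerns flattening $u$ near $x_0$. A linear cutoff between $u$ and a smooth approximant leaves $\sph^1$. Do the flattening on a local phase $\varphi$ with $u=e^{i\varphi}$ near $x_0$, as in Lemma~\ref{lem:loglog-transition}; this also keeps the exterior datum.
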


\begin{proof}
This follows from the construction in the proof of Part (iv) of Theorem 1.3 in \cite{HM}: given $\ve>0$ it is possible to attach $|j-\ell|$ bubbles to any
$u\in \M{E}_{g,j}$, obtaining a map
$v\in \M{E}_{g,\ell}$ such that
\[
E(v)\le E(u)+2\pi |j-\ell|+\ve .
\]
Letting first $u$ approach $m_{g,j}$ and then $\ve\downarrow0$ gives the claim.  
\end{proof}

\noindent\emph{Proof of Theorem \ref{thmBl} (completed).} The energy identity \eqref{enBla} follows directly from \cite[Lemma 3.1]{BMRS} or \cite[Lemma 5.1]{HM}. Moreover, traces of Blaschke products $\tilde h$ of degree $d$ are the only maps in {$\dot H^\frac12(\R,\sph^1)$ of degree $d$} with energy $2\pi |d|$ {(still by \cite[Lemma 3.1]{BMRS} or \cite[Lemma 5.1]{HM})}. { Since \eqref{enBla2} follows from Lemma \ref{l1}, it follows at once that the infimum in \eqref{enBla2} is not attained.} \hfill$\square$

\medskip

\subsection{Proof of Theorem \ref{Bl3}}

We start with a few lemmas. First we recall Lemma 4.1 from \cite{HM}.

\begin{lem}[\cite{HM}]\label{l2} Let $u\in \M E_g(I)$ be such that for some
$x_0\in I$ we have $u'(x_0)\neq 0$ and
\begin{equation}\label{perp}
\partial_y \tilde u(x_0,0)\perp T_{u(x_0)}\sph^1,
\end{equation}
where $\tilde u$ denotes the Poisson harmonic extension of $u$. Then there exists
$v\in\M E_g$ such that
\[
E(v)<E(u)+2\pi,\qquad
\deg(v)=\deg(u)-\operatorname{sign}(u(x_0)\wedge u'(x_0)).
\]
\end{lem}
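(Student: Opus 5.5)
The idea is to glue a rescaled degree $\mp1$ bubble into $u$ near $x_0$, in the spirit of the Brezis--Coron energy-gap computation, and to use \eqref{perp} to show that the gluing gains a first-order amount of energy. This gain is what makes the inequality $E(v)<E(u)+2\pi$ strict.

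\textbf{Step 1: normalisation.} Set $\sigma:=\operatorname{sign}(u(x_0)\wedge u'(x_0))$. After a rotation of $\sph^1$ and a translation we may assume $x_0=0$, $u(0)=1$, and $u'(0)=i\sigma\lambda$ with $\lambda>0$. Near $0$ we then have $\tilde u\approx 1+i\sigma\lambda z$ to first order in the upper half plane. Condition \eqref{perp} says that $\partial_y\tilde u(0,0)$ is parallel to $u(0)=1$, i.e.
\[
\partial_y\tilde u(0,0)=\mu\in\R .
\]

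\textbf{Step 2: the competitor.} Let $b_\ve(z):=\frac{z-i\ve}{z+i\ve}$ if $\sigma=-1$, and its conjugate if $\sigma=1$. This is a Blaschke factor of degree $-\sigma$, and its trace on $\R$ has energy exactly $2\pi$. Cut the bubble off: choose $\eta_\ve\in \dot H^{\frac12}(\R,\sph^1)$ that equals $b_\ve$ on $|x|<R\ve$, equals $1$ outside a small interval $(-r,r)\subset I$, and has logarithmic (or $\log\log$, as in Lemma \ref{lem:loglog-transition}) interpolation of the phase in between. Then put $v:=u\,\eta_\ve$.

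By construction $v=g$ outside $I$. By Lemma \ref{lem:degree-additivity-product}, $\deg(v)=\deg(u)-\sigma$.

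\textbf{Step 3: energy expansion.} Write the energy of the product in the bilinear form \eqref{scalarprod}:
\[
E(u\eta_\ve)=E(u)+E(\eta_\ve)+\text{(cross term)}.
\]
Here $E(\eta_\ve)=2\pi+o(\ve)$, because the truncation costs little. This is checked as in Lemma \ref{lem:loglog-transition}, choosing $R\to\infty$ and $r\to0$ suitably.

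For the cross term, work on the harmonic extensions in $\R^2_+$, where $\widetilde{u\eta}\approx \tilde u\,\tilde\eta$ up to controllable errors. The cross term then reduces to integrals of the type
\[
2\int_{\R^2_+}\mathrm{Re}\big(\nabla\tilde u\cdot\overline{\tilde u}\,\nabla\tilde\eta\,\overline{\tilde\eta}\big)
\]
plus lower-order terms. Inserting the Taylor expansion of $\tilde u$ at $0$ gives a leading contribution of size $\ve$, made of two pieces:
\begin{itemize}
\item $\mu$ times a pure number, which vanishes because the coefficient is real by \eqref{perp};
\item $\lambda$ times a pairing between the rotation $u'(0)$ and the winding of $b_\ve$.
\end{itemize}
The sign of the second piece is $-\lambda c\ve$ with $c>0$ exactly when $\eta_\ve$ winds opposite to $\sigma$. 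So
\[
E(v)\le E(u)+2\pi-c\lambda\ve+o(\ve)<E(u)+2\pi
\]
for $\ve$ small.

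\textbf{Main obstacle.} The delicate step is the cross-term computation. One has to show that the first-order term is exactly $-c\lambda\ve$ with an explicit $c>0$. One also has to show that every other error is $o(\ve)$: the truncation of $b_\ve$, the second-order Taylor remainder of $u$ (using that $u$ is smooth near $x_0$), and the nonlocal interactions with the far field. I would first try computing the leading term in Fourier/complex form, using that $b_\ve-1$ is a pure negative-frequency (or positive-frequency) function. In that form the pairing with $u-1\approx i\sigma\lambda x+\tfrac12 u''(0)x^2$ becomes an explicit residue computation. The orthogonality \eqref{perp} is precisely what kills the term that is not sign-definite.
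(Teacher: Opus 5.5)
\paragraph{Verdict: genuine gap.} Your strategy is the right Brezis--Coron-type mechanism: multiply $u$ by a truncated Blaschke factor of degree $-\sigma$ concentrated at $x_0$, and extract a gain linear in $\varepsilon$. Steps 1--2 are fine. But the lemma rests entirely on the first-order computation of Step 3, which you do not carry out, and your heuristics for it are wrong exactly where it matters.

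\begin{itemize}
\item[(i)] \emph{The truncation is not an $o(\varepsilon)$ effect in the interaction.} The integral $\int_{\R^2_+}\tilde b_\varepsilon\overline{\nabla\tilde b_\varepsilon}$ decays like $|z|^{-2}$, so it is only conditionally convergent. Cutting it off changes it by an amount of order $\varepsilon$, independent of $r$. Note that $\mu\le0$ automatically, since $|\tilde u|\le1$. With the cut-off of the next paragraph, the core $B_r^+$ contributes $-2\pi(\lambda+\mu)\varepsilon+o(\varepsilon)$ to the cross term, which is $\ge0$ when $|\mu|\ge\lambda$. The cut-off layer adds another $-2\pi(\lambda-\mu)\varepsilon$. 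So the $\mu$-piece vanishes only once you include the layer, which is precisely the part you declared negligible.
\item[(ii)] \emph{On the line, the $\varepsilon$-order interaction is nonlocal.} Exactly,
\[
E(u\eta)-E(u)-E(\eta)=-\frac{1}{4\pi}\iint\frac{|u(x)-u(y)|^2\,|\eta(x)-\eta(y)|^2}{|x-y|^2}\,dx\,dy+\frac{1}{\pi}\iint\frac{\mathrm{Im}\bigl(u(x)\overline{u(y)}\bigr)\,\mathrm{Im}\bigl(\eta(x)\overline{\eta(y)}\bigr)}{|x-y|^2}\,dx\,dy .
\]
Since $\varepsilon^{-1}|\eta-1|^2\rightharpoonup 4\pi\delta_0$, the first integral equals $-2\varepsilon\int_\R\frac{|u(0)-u(y)|^2}{y^2}\,dy+o(\varepsilon)=-4\pi|\mu|\varepsilon+o(\varepsilon)$. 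So the first-order term depends on $u$ globally, through $\mu$. It cannot come out of a residue computation using only the Taylor polynomial of $u$ at $0$.
\item[(iii)] \emph{The conjugation in your cross term is on the wrong factor.} The correct term is $2\,\mathrm{Re}\int\overline{\tilde u}\nabla\tilde u\cdot\tilde\eta\,\overline{\nabla\tilde\eta}$. Your version differs from it by exactly the sign you are trying to determine.
\end{itemize}

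\paragraph{How to close the gap.} Do not approximate $\widetilde{u\eta}$. Instead use the Dirichlet principle $E(u\eta)\le\int_{\R^2_+}|\nabla(\tilde u\tilde\eta)|^2$ with an explicit extension:
\begin{itemize}
\item set $\tilde\eta=\tilde b_\varepsilon$ on $B_r^+$ and $\tilde\eta=\exp\bigl(\chi(|z|/r)\log\tilde b_\varepsilon\bigr)$ outside, where $\chi=1$ on $[0,1]$ and $\chi=0$ on $[2,\infty)$;
\item then $|\tilde\eta|=1$ on $\R$, $\tilde\eta\equiv1$ off $B_{2r}^+$, $\deg(\tilde\eta|_{\R})=-\sigma$, and $\int|\nabla\tilde\eta|^2\le 2\pi+C\varepsilon^2/r^2$.
\end{itemize}
Using $|\tilde u|,|\tilde\eta|\le1$,
\[
E(u\eta)\le E(u)+\int_{\R^2_+}|\nabla\tilde\eta|^2+2\,\mathrm{Re}\int_{\R^2_+}\overline{\tilde u}\nabla\tilde u\cdot\tilde\eta\,\overline{\nabla\tilde\eta}.
\]
Freezing $\overline{\tilde u}\nabla\tilde u$ at $(0,0)$ costs $O(\varepsilon r^\alpha)$ if $u\in C^{1,\alpha}$ near $0$. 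Moreover
\[
\int\tilde\eta\,\overline{\nabla\tilde\eta}=(2\pi i\sigma\varepsilon,0)+O(\varepsilon^2/r).
\]
The $y$-component vanishes: its real part is $\frac12\int\partial_y|\tilde\eta|^2=0$, and its imaginary part is odd under $x\mapsto-x$ because $\tilde\eta(-x,y)=\overline{\tilde\eta(x,y)}$. For the $x$-component, the core and the cut-off layer each contribute $\pi i\sigma\varepsilon$. This yields
\[
E(v)\le E(u)+2\pi-4\pi\lambda\varepsilon+o(\varepsilon)\qquad\text{for }\sqrt\varepsilon\ll r\ll1 .
\]

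Note that $\partial_y\tilde u(0,0)$ drops out of this bound completely. So \eqref{perp} is not what kills a non-sign-definite term; the computation above never uses it. Keeping the term $-\int(1-|\tilde u|^2)|\nabla\tilde\eta|^2$ would even add a further gain of $-4\pi|\mu|\varepsilon$.
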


\begin{lem}\label{l4} Let $(u_n)$ be a minimizing sequence for $m_{g,d}$ in
$\M E_{g,d}(I)$, and assume that, up to a subsequence,
$u_n\rightharpoonup u\in \M E_{g,\ell}(I)$ with $\ell\neq d$. Then
\[
m_{g,\ell}=m_{g,d}-2\pi|d-\ell|,
\qquad
E(u)=m_{g,\ell}.
\]
\end{lem}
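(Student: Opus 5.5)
The argument will be a sandwich between the global degree-jump estimate and the bubble-attaching estimate. Throughout, $m_{g,d}=\lim_n E(u_n)$ because $(u_n)$ is a minimizing sequence.

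First, apply Proposition \ref{p:jump} to $(u_n)$. We have $u_n\in\M E_{g,d}\subset\mathcal E_d$, and $u_n\rightharpoonup u\in \M E_{g,\ell}$. Note that $u=g$ outside $I$, since weak convergence in $\dot H^{\frac12}$ gives a.e. convergence along a subsequence. Hence
\[
E(u)\le \liminf_{n\to\infty}E(u_n)-2\pi|d-\ell| = m_{g,d}-2\pi|d-\ell|.
\]
Since $u\in\M E_{g,\ell}$, we also have $m_{g,\ell}\le E(u)$.

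Second, use Lemma \ref{l1} with $j=\ell$ and target $d$:
\[
m_{g,d}\le m_{g,\ell}+2\pi|d-\ell|.
\]
Chaining the three inequalities gives
\[
m_{g,d}-2\pi|d-\ell|\le m_{g,\ell}\le E(u)\le m_{g,d}-2\pi|d-\ell|,
\]
so every inequality in the chain is an equality. This yields both $m_{g,\ell}=m_{g,d}-2\pi|d-\ell|$ and $E(u)=m_{g,\ell}$; in particular $u$ minimizes $E$ in $\M E_{g,\ell}$.

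The only point needing care is that the weak limit $u$ lies in $\M E_g$, so that $\deg(u)=\ell$ makes sense within the constrained class. This holds because the constraint $u_n=g$ on $\R\setminus I$ passes to a.e. limits. Apart from that, the argument is a direct combination of the two earlier results, and I do not expect a genuine obstacle.
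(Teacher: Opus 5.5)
Your proposal is correct and follows the paper's proof: Proposition \ref{p:jump} gives $E(u)\le m_{g,d}-2\pi|d-\ell|$, Lemma \ref{l1} (with the roles of the indices swapped) gives the reverse bound $m_{g,d}-2\pi|d-\ell|\le m_{g,\ell}$, and since $m_{g,\ell}\le E(u)$ the chain of inequalities collapses to equalities. Your remark that $u=g$ outside $I$ is harmless but not needed, since $u\in\M E_{g,\ell}$ is already part of the hypothesis.
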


\begin{proof}
Proposition \ref{p:jump} gives
\[
E(u)\le \liminf_{n\to\infty}E(u_n)-2\pi|d-\ell|
       =m_{g,d}-2\pi|d-\ell|.
\]
Together with Lemma~\ref{l1},
\[
m_{g,\ell}\le E(u)
\le m_{g,d}-2\pi|d-\ell|
\le m_{g,\ell}.
\]
Thus all inequalities are equalities.
\end{proof}

\begin{lem}\label{lem:strict-comparison}
Let $g\in C^1(\R,\sph^1)\cap \dot H^{\frac12}(\R,\sph^1)$. Assume that
\[
q:=\frac1{2\pi}\int_{\R\setminus I} g\wedge g'\,dx\in \R.
\]
Given $d\in \mathbb{Z}$, assume that $m_{g,d}$ is achieved by a map $u_d$. Then:
\begin{itemize}
\item if $d>q$, then
\[
m_{g,\ell}<m_{g,d}+2\pi(d-\ell)\qquad\text{for every }\ell<d;
\]
\item if $d<q$, then
\[
m_{g,\ell}<m_{g,d}+2\pi(\ell-d)\qquad\text{for every }\ell>d;
\]
\item if $d=q$ and $u_d$ is non-constant in $I$  then 
\[
m_{g,\ell}<m_{g,d}+2\pi|d-\ell|
\qquad\text{for every }\ell\neq d.
\]
\end{itemize}
\end{lem}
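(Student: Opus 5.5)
The plan is to use Lemma \ref{l2} to produce a point $x_0\in I$ where one bubble can be attached to $u_d$ with the ``right'' orientation, at cost strictly less than $2\pi$. Then we iterate the jump by Lemma \ref{l1}. Since $u_d$ minimizes $E$ in $\M E_{g,d}$, it is half-harmonic in $I$, hence smooth in $I$ and continuous on $\bar I$ (Lemma \ref{bdryreg}). Moreover the Euler--Lagrange equation gives $(-\Delta)^{\frac12}u_d(x)\perp T_{u_d(x)}\sph^1$, which is exactly hypothesis \eqref{perp} at every $x_0\in I$. Thus Lemma \ref{l2} applies at any $x_0\in I$ with $u_d'(x_0)\neq0$, and yields $v\in\M E_g$ with $E(v)<m_{g,d}+2\pi$ and $\deg(v)=d-\operatorname{sign}(u_d\wedge u_d')(x_0)$.

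The key step is to locate the sign of $u_d\wedge u_d'$ in $I$. Since $u_d\in C^0(\R)$ (here $g\in C^1$ and $u_d\in C^0(\bar I)$) and $u_d$ is smooth in $I$, the degree splits as
\[
d=\deg(u_d)=\frac1{2\pi}\int_{\R\setminus I} g\wedge g'\,dx+\frac1{2\pi}\int_I u_d\wedge u_d'\,dx = q+\frac1{2\pi}\int_I u_d\wedge u_d'\,dx .
\]
To justify this formula I would approximate $u_d$, or argue via a continuous lifting $u_d=e^{i\phi}$ on $\R$ (or on $\sph^1$ after stereographic projection). Then $2\pi d$ is the total increment of $\phi$, which is additive over $I$ and $\R\setminus I$, and on $I$ the increment equals $\int_I\phi'$ whenever that integral converges. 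The expected main obstacle is convergence near $\pm1$, where only continuity is known. I would handle it by computing the increment over $[-1+\ve,1-\ve]$ as $\phi(1-\ve)-\phi(-1+\ve)$ and letting $\ve\to0$. This avoids integrability issues, and it suffices, since I only need existence of points where the sign of $\phi'$ is prescribed.

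Now the cases. If $d>q$, the increment of $\phi$ over $I$ is positive. Hence there is $x_0\in I$ with $\phi'(x_0)>0$, i.e.\ $u_d\wedge u_d'(x_0)>0$ (and $u_d'(x_0)\ne0$). Lemma \ref{l2} gives $m_{g,d-1}\le E(v)<m_{g,d}+2\pi$. For $\ell<d$, Lemma \ref{l1} then gives
\[
m_{g,\ell}\le m_{g,d-1}+2\pi(d-1-\ell)<m_{g,d}+2\pi(d-\ell).
\]
The case $d<q$ is symmetric: some point has $\phi'<0$, so we get $m_{g,d+1}<m_{g,d}+2\pi$, and then we chain upward. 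If $d=q$, the increment of $\phi$ over $I$ vanishes. Since $u_d$ is non-constant in $I$, $\phi$ is non-constant on $I$ and is continuous on $\bar I$ with $\phi(1)=\phi(-1)$. Hence $\phi'$ takes both signs somewhere in $I$, which gives both strict one-step inequalities, and then all $\ell\ne d$ follow by Lemma \ref{l1} as before.
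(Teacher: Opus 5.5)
Your proposal is correct and follows essentially the same route as the paper. Both arguments use a continuous lifting on $\bar I$ to get $\frac{1}{2\pi}\int_I u_d\wedge u_d'\,dx=d-q$, read as the limit of $\varphi(1-\ve)-\varphi(-1+\ve)$. They then locate a point $x_0\in I$ where $u_d\wedge u_d'$ has the required sign (both signs when $d=q$ and $u_d$ is non-constant), apply Lemma \ref{l2} to get the strict one-step inequality, and chain it with Lemma \ref{l1}.
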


\begin{proof}
Since $u_d$ is half-harmonic in $I$ and $g\in C^1(\R,\sph^1)$, by the regularity theory (\cite{DLR,Sch} and Lemma \ref{bdryreg}), $u_d\in C^\infty(I)\cap C^0(\bar I)$. Since $u_d=g$ on $\R\setminus I$ and $\deg(u_d)=d$, choosing a continuous lifting $\varphi\in C^\infty(I)\cap C^0(\bar I)$ of $u_d$ on $\bar I$, i.e. $u_d=e^{i\varphi}$ on $\bar I$, we have
\[
\frac1{2\pi}\int_I u_d\wedge u_d'\,dx=d-q,
\]
where the integral is understood as
\[
\int_I u_d\wedge u_d'\,dx
:=\lim_{\varepsilon\downarrow0}
\int_{-1+\varepsilon}^{1-\varepsilon} u_d\wedge u_d'\,dx
=\lim_{\ve\downarrow 0}\varphi(1-\ve)-\varphi(-1+\ve)= \varphi(1)-\varphi(-1).
\]
If $d>q$, then $u_d(x_0)\wedge u_d'(x_0)>0$ for some $x_0\in I$. Since $u_d$ is half-harmonic, \eqref{perp} is satisfied, hence Lemma~\ref{l2} gives
\[
m_{g,d-1}<m_{g,d}+2\pi .
\]
For $\ell<d-1$, Lemma~\ref{l1} then yields
\[
m_{g,\ell}\le m_{g,d-1}+2\pi(d-1-\ell)
          < m_{g,d}+2\pi(d-\ell).
\]
The case $d<q$ is identical, using a point $x_0$ where $u_d(x_0)\wedge u_d'(x_0)<0$.

Finally assume $d=q$. Then
\[
\int_I u_d\wedge u_d'\,dx=0,
\]
and since $u_d$ is not constant in $I$, the function $u_d\wedge u_d'$ takes both positive and negative values in $I$. Applying Lemma~\ref{l2} in both directions gives the strict inequalities for $\ell=d\pm1$, and Lemma~\ref{l1} gives all remaining $\ell$.
\end{proof}

\begin{lem}\label{lem:attainment-criterion}
Assume that for a fixed $d$ one has
\[
m_{g,d}<m_{g,\ell}+2\pi|d-\ell|
\qquad\text{for every }\ell\neq d.
\]
Then $m_{g,d}$ is achieved.
\end{lem}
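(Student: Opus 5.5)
The plan is to apply the direct method in $\M E_{g,d}$ and use the degree-jump estimate to show that the weak limit cannot change degree.

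Let $(u_n)\subset \M E_{g,d}$ be a minimizing sequence, so that $E(u_n)\to m_{g,d}$. Since $u_n=g$ outside $I$ and $\sup_n E(u_n)<\infty$, the differences $u_n-g$ are bounded in the space of $H^{\frac12}$ functions supported in $\bar I$. On that space the seminorm is equivalent to the full norm by the fractional Poincar\'e inequality, exactly as for the space $X$ in \eqref{defX}.

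Up to a subsequence we then have $u_n\rightharpoonup u$ weakly in $\dot H^{\frac12}$. By the compact embedding into $L^2(I)$ we also have $u_n\to u$ a.e. Hence $u$ is $\sph^1$-valued, $u=g$ on $\R\setminus I$, and $u\in \M E_{g,\ell}$ for some $\ell\in\mathbb Z$. The degree is well defined because $u\in\dot H^{\frac12}(\R,\sph^1)$.

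Suppose $\ell\ne d$. Then Lemma \ref{l4} applies directly and gives $m_{g,\ell}=m_{g,d}-2\pi|d-\ell|$, that is, $m_{g,d}=m_{g,\ell}+2\pi|d-\ell|$. This contradicts the hypothesis $m_{g,d}<m_{g,\ell}+2\pi|d-\ell|$. Alternatively, Proposition \ref{p:jump} gives $m_{g,\ell}\le E(u)\le m_{g,d}-2\pi|d-\ell|$, which contradicts the hypothesis just as directly.

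Therefore $\ell=d$ and $u\in\M E_{g,d}$. Weak lower semicontinuity of the quadratic energy $E$ gives $E(u)\le\liminf_n E(u_n)=m_{g,d}$, so $E(u)=m_{g,d}$ and the infimum is achieved.

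The only point requiring care is the compactness step. We must check that the weak limit stays in the constraint set up to the degree: it has to be $\sph^1$-valued and agree with $g$ outside $I$. This follows from a.e. convergence along a subsequence, obtained from the compact embedding as in the proof of Proposition \ref{ldegjump}.
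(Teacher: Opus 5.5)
Your proof is correct and follows the paper's argument: take a minimizing sequence in $\M E_{g,d}$, extract a weak limit lying in some class $\M E_{g,\ell}$, rule out $\ell\neq d$ via Lemma \ref{l4} (your alternative, applying Proposition \ref{p:jump} directly, works just as well and does not need Lemma \ref{l1}), and conclude for $\ell=d$ by lower semicontinuity. The compactness details you add, namely a.e.\ convergence giving an $\sph^1$-valued limit equal to $g$ outside $I$, are exactly what the paper leaves implicit.
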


\begin{proof}
Let $(u_n)\subset\M E_{g,d}(I)$ be a minimizing sequence and, up to a subsequence, let
$u_n\rightharpoonup u\in \M E_{g,\ell}(I)$. If $\ell=d$, then $u$ achieves $m_{g,d}$ by lower semicontinuity (Proposition \ref{p:jump}). If $\ell\neq d$, Lemma~\ref{l4} gives
\[
m_{g,d}=m_{g,\ell}+2\pi|d-\ell|,
\]
contradicting the assumption.
\end{proof}

\noindent\emph{Proof of Theorem \ref{Bl3} (completed).}
By Theorem~\ref{thmBl} $m_{g,k}=2\pi k$ is achieved by $g$, and $m_{g,d}$ is not achieved for any $d>k$. Moreover, if a minimizing sequence for some $m_{g,d}$ weakly converges to $u\in \mathcal{E}_{g,r}$ with $r\neq d$, then Lemma~\ref{l4} gives
\[
 m_{g,d}=m_{g,r}+2\pi |d-r|,
\]
and $m_{g,r}$ is achieved. We shall use this fact repeatedly.

We also observe that if $q\in\mathbb Z$ and $m_{g,q}$ is achieved by a minimizer $u_q$, then  $u_q$ is non-constant in $I$ (see e.g. \cite[Lemma 4.3]{HM}).

\medskip

\noindent\emph{The non-integer case.}
Assume that $q\in(\ell,\ell+1)$ for some $\ell\in\{0,\dots,k-1\}$. We prove that $m_{g,\ell}$ and $m_{g,\ell+1}$ are achieved; if $\ell+1=k$, the latter is already known from Theorem~\ref{thmBl}.

First assume $\ell+1<k$. Since $m_{g,k}$ is achieved and $k>q$, Lemma~\ref{lem:strict-comparison} gives
\begin{equation}\label{strictell}
 m_{g,\ell+1}<m_{g,k}+2\pi(k-\ell-1).
\end{equation}
Let $(u_n)\subset \mathcal{E}_{g,\ell+1}$ be a minimizing sequence for $m_{g,\ell+1}$ and assume that $u_n\rightharpoonup u\in \mathcal{E}_{g,r}$ weakly for some $r\neq \ell+1$. Then
\[
 m_{g,\ell+1}=m_{g,r}+2\pi|\ell+1-r|,
\]
and $m_{g,r}$ is achieved by $u$. If $r>k$, this contradicts Theorem~\ref{thmBl}. If $r=k$, it contradicts \eqref{strictell}. If $\ell+1<r<k$, then $r>q$, and Lemma~\ref{lem:strict-comparison} gives
\[
 m_{g,\ell+1}<m_{g,r}+2\pi(r-\ell-1),
\]
again a contradiction. Finally, if $r\le \ell$, then $r<q$, and Lemma~\ref{lem:strict-comparison} gives
\[
 m_{g,\ell+1}<m_{g,r}+2\pi(\ell+1-r),
\]
contradiction. Hence $m_{g,\ell+1}$ is achieved.

Now $m_{g,\ell+1}$ is achieved and, since $\ell+1>q$, Lemma~\ref{lem:strict-comparison} gives
\[
 m_{g,\ell}<m_{g,\ell+1}+2\pi.
\]
Consider now a sequence $(v_n)\subset \mathcal{E}_{g,\ell}$ minimizing for $m_{g,\ell}$, and assume that $v_n\rightharpoonup v\in \mathcal{E}_{g,r}$ weakly for some $r\neq \ell$. Then $m_{g,r}$ is achieved and
\begin{equation}\label{mgell}
 m_{g,\ell}=m_{g,r}+2\pi|\ell-r|.
\end{equation}
The case $r>k$ is impossible by Theorem~\ref{thmBl}. If $r\ge \ell+1$, then $r>q$, and Lemma~\ref{lem:strict-comparison} gives
\[
 m_{g,\ell}<m_{g,r}+2\pi(r-\ell),
\]
contradiction. If $r<\ell$, then $r<q$, and Lemma~\ref{lem:strict-comparison} gives
\[
 m_{g,\ell}<m_{g,r}+2\pi(\ell-r),
\]
again a contradiction. Therefore $m_{g,\ell}$ is achieved.

Taking $\ell=k-1$ gives the first item of the theorem. Taking $\ell\in\{0,\dots,k-2\}$ gives the fourth item; in particular, for $q\in(k-2,k-1)$ it gives attainment in the classes $k-2,k-1,k$.

\smallskip

\noindent\emph{The integer case.}
Assume now that $q=\ell\in\{1,\dots,k-1\}$. We prove that $m_{g,\ell+1}$, $m_{g,\ell}$ and $m_{g,\ell-1}$ are achieved, with the convention that $m_{g,\ell+1}=m_{g,k}$ is already achieved if $\ell=k-1$.

If $\ell+1<k$, the proof that $m_{g,\ell+1}$ is achieved is the same as above, except for the possible weak limit $r=\ell=q$. In that case Lemma~\ref{l4} makes $m_{g,\ell}$ achieved; by the non-constancy observation at the beginning of the proof, Lemma~\ref{lem:strict-comparison} applies at level $\ell=q$ and gives
\[
 m_{g,\ell+1}<m_{g,\ell}+2\pi,
\]
contradicting the equality given by Lemma~\ref{l4}. Thus $m_{g,\ell+1}$ is achieved.

Since $\ell+1>q$ and $m_{g,\ell+1}$ is achieved, Lemma~\ref{lem:strict-comparison} gives
\[
 m_{g,\ell}<m_{g,\ell+1}+2\pi.
\]
As before, if a minimizing sequence for $m_{g,\ell}$ weakly converges to a function of degree $r\neq \ell$, Lemma~\ref{l4} makes $m_{g,r}$ achieved and gives equality \eqref{mgell}. The case $r>k$ again is impossible. If $r\ge \ell+1$, then $r>q$ and Lemma~\ref{lem:strict-comparison} gives the opposite strict inequality. If $r<\ell$, then $r<q$ and Lemma~\ref{lem:strict-comparison} again gives the opposite strict inequality. Therefore $m_{g,\ell}$ is achieved.

The minimizer at level $\ell=q$ is non-constant in $I$, hence Lemma~\ref{lem:strict-comparison} gives
\[
 m_{g,\ell-1}<m_{g,\ell}+2\pi.
\]
If a minimizing sequence for $m_{g,\ell-1}$  weakly converges to a map of degree $r\neq \ell-1$, then by Lemma~\ref{l4} $m_{g,r}= m_{g,\ell-1}-2\pi |\ell-1-r|$ is achieved. The case $r>k$ is impossible. If $r\ge \ell$, Lemma~\ref{lem:strict-comparison} gives a strict comparison downward to $\ell-1$; for $r=\ell$ we use again the non-constancy of the minimizer at level $q$. If $r<\ell-1$, then $r<q$ and Lemma~\ref{lem:strict-comparison} gives a strict comparison upward to $\ell-1$. In all cases we contradict the equality from Lemma~\ref{l4}. Thus $m_{g,\ell-1}$ is achieved.

For $\ell=k-1$ this gives the second item of the theorem with $q=k-1$. For $\ell\in\{1,\dots,k-2\}$ it gives the third item.
\hfill$\square$

\appendix

\section{Appendix}

We give a simple proof of the boundary regularity result that we used in this paper. A more general result can be found in \cite{Sch}.

\begin{lem}\label{bdryreg} Let $g\in C^0\cap\dot H^\frac12(\R,\sph^1)$ and let $u\in \dot H^{\frac12}(\R,\sph^1)$ be $1/2$-harmonic in an interval $(a,b)$ with $u=g$ in $\R\setminus (a,b)$. Then $u$ is continuous in $\R$.
\end{lem}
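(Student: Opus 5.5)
The plan is a continuity argument on the interval by blow-up, using the smallness of energy on small scales.

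Interior continuity (indeed smoothness) in $(a,b)$ follows from Da Lio--Rivi\`ere \cite{DLR}. Continuity on the open set $\R\setminus[a,b]$ is immediate because $u=g$ there. So the only issue is continuity at the endpoints. I treat $x_0=b$; the point $a$ is symmetric.

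\textbf{Step 1: small energy at small scales.} Since $\iint|u(x)-u(y)|^2|x-y|^{-2}\,dx\,dy<\infty$, absolute continuity gives
\[
\iint_{B_r(b)\times B_r(b)}\frac{|u(x)-u(y)|^2}{|x-y|^2}\,dx\,dy\to 0 \quad \text{as } r\to0.
\]
Since $g$ is continuous at $b$, the oscillation of $g$ on $B_r(b)\setminus(a,b)$ also tends to $0$. A Poincar\'e-type averaging shows that the mean of $u$ over $B_r(b)\cap(a,b)$ is close to $g(b)$ in the VMO sense. This already gives $u\in \mathrm{VMO}$ near $b$, but VMO is not continuity.

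\textbf{Step 2: oscillation decay near $b$.} I would use the harmonic extension $\tilde u$ and argue as for harmonic maps with continuous boundary data, which is where the half-harmonic equation enters. The equation says $\partial_y\tilde u\parallel u$ on $(a,b)$. Writing locally $u=e^{i\varphi}$ (possible since $u$ is VMO near $b$ and $\sph^1$ is a circle), the equation becomes linear: $\varphi$ is half-harmonic, i.e.\ $(-\Delta)^{1/2}\varphi=0$ in $(a,b)$. This uses the standard fact that for $\sph^1$-valued maps the half-harmonic equation reduces to $\mathrm{div}$-type, Wente-free linear equations for the lifting modulo a commutator (compare \cite{DLR, milsir}).

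Care is needed here. The lifting $\varphi$ lies in $H^{1/2}_{loc}$ near $b$ (by Brezis--Nirenberg in VMO), and $\varphi$ equals a continuous lifting of $g$ outside $(a,b)$ near $b$. Strictly speaking the equation is nonlocal, so $\varphi$ is only defined near $b$. I would therefore cut off: $\varphi=\varphi_1+\varphi_2$, where $\varphi_2$ collects the far-away contributions, which are smooth near $b$, and $\varphi_1$ solves a fractional Dirichlet problem on $(b-r,b)$ with continuous exterior datum.

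\textbf{Step 3: continuity of the linear problem.} Solutions of $(-\Delta)^{1/2}w=f$ in an interval, with $w$ continuous outside and $f$ bounded, are continuous up to the boundary. This follows by comparison with barriers such as $(r^2-|x-c|^2)_+^{1/2}$ and the maximum principle. It yields that $\varphi$, hence $u$, is continuous at $b$.

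The main obstacle is Step 2: justifying rigorously that the nonlinear equation reduces to a linear one for the lifting with controllable error near the endpoint. If this fails, I would fall back on a Morrey/Campanato-type oscillation decay, obtained by testing the equation with $u\wedge(\tilde u-\bar u_r)\eta$. In that approach, the smallness from Step 1 absorbs the nonlinear term $|\nabla\tilde u|^2$, and the continuity of $g$ controls the boundary contribution.
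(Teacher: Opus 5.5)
Your proposal has a genuine gap: Step 2 rests on a false reduction, and Step 3 cannot be applied without it.

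\textbf{The reduction in Step 2 is false.} Because $(-\Delta)^{\frac12}$ is nonlocal, it does not commute with the exponential. So for $u=e^{i\varphi}$ the half-harmonic equation is not linear in the lifting. One computes
\[
u\wedge(-\Delta)^{\frac12}u(x)=\frac1\pi\,\mathrm{P.V.}\int_\R\frac{\sin(\varphi(x)-\varphi(y))}{(x-y)^2}\,dy .
\]
Hence on $(a,b)$ the lifting solves $(-\Delta)^{\frac12}\varphi=N(\varphi)$, where $N(\varphi)(x)=\frac1\pi\int_\R\frac{\theta-\sin\theta}{(x-y)^2}\,dy$ and $\theta:=\varphi(x)-\varphi(y)$. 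In general $N(\varphi)\neq 0$. For example, $u(x)=\frac{x-i}{x+i}$ (trace of a Blaschke product) is half-harmonic on every interval and has lifting $\varphi(x)=\pi+2\arctan x$, yet $(-\Delta)^{\frac12}\varphi(x)=\frac{2x}{1+x^2}\neq0$.

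\textbf{Why Step 3 then fails.} Near $b$, the only a priori control on $N(\varphi)$ comes from $|\theta-\sin\theta|\le C\theta^2$ and the energy, i.e.\ an $L^1$-type bound. That is exactly the critical case. In one dimension the fundamental solution of $(-\Delta)^{\frac12}$ is logarithmic, so $L^1$ right-hand sides can produce unbounded solutions. Your barrier argument needs $f\in L^\infty$ near $b$, so it presupposes the endpoint regularity you are trying to prove.

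\textbf{The fallback does not close the gap.} Your last paragraph only asserts that a boundary $\varepsilon$-regularity (Morrey decay) estimate should hold at the point where the Dirichlet condition meets the free-boundary condition. That is the hard part, and it is not carried out. Note also that $\tilde u$ is harmonic, so there is no interior term $|\nabla\tilde u|^2$ to absorb; the nonlinearity sits entirely in the boundary condition on $(a,b)$.

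\textbf{The missing idea: no endpoint analysis is needed.} Your Step 1 already contains the key ingredient. Argue by contradiction: suppose $r_k\downarrow 0$ and $|u(b-r_k)-g(b)|\ge\eta>0$. Blow up at the endpoint by setting $u_k(x):=u(b-r_kx)$. Then $u_k$ is half-harmonic on $(0,(b-a)/r_k)$, and for $x\le 0$ it equals $g(b-r_kx)$, which tends to $g(b)$ locally uniformly. By scale invariance and absolute continuity of the Gagliardo integral, $[u_k]_{\dot H^{1/2}((-R,R))}\to0$ for every $R$. The fractional Poincar\'e inequality, together with uniform convergence on $(-R,-R/2)$, then gives $u_k\to g(b)$ in $L^2((-R,R))$. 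After rescaling, the bad points sit at $x=1$, at unit distance from the endpoint. So the \emph{interior} $\varepsilon$-regularity for half-harmonic maps on $(1/4,3)$ gives uniform $C^\alpha$ bounds on $[1/2,3/2]$. Hence $u_k(1)=u(b-r_k)\to g(b)$, a contradiction.

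This is the paper's proof, written there for $(a,b)=(0,1)$ and the endpoint $0$. It uses only interior regularity, which is what lets it avoid the critical boundary problem your route runs into.
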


\begin{proof}
Up to scaling, translation and reflection, it is enough to consider $(a,b)=(0,1)$ and prove continuity at $0$.
Interior regularity gives $u\in C^\infty((0,1),\sph^1)$.

Assume by contradiction that $u(x)\not\to g(0)$ as $x\downarrow0$. Then there are
$r_k\downarrow0$ and $\eta>0$ such that
\[
|u(r_k)-g(0)|\ge \eta .
\]
Set
\[
u_k(x):=u(r_kx).
\]
Then $u_k$ is $1/2$-harmonic in $(0,r_k^{-1})$, and $u_k(x)=g(r_kx)$
for $x\le0$. Since $g$ is continuous, we have
$u_k\to g(0)$ locally uniformly in $(-\infty,0]$.

We claim that for every $R>0$,
\[
[u_k]^2_{\dot H^{\frac12}((-R,R))}
=
\int_{-R}^{R}\int_{-R}^{R}
\frac{|u_k(x)-u_k(y)|^2}{|x-y|^2}\,dx\,dy
\to0,\quad \text{as }k\to+\infty.
\]
Indeed, by the change of variables $X=r_kx$, $Y=r_ky$,
\[
[u_k]^2_{\dot H^{\frac12}((-R,R))}
=
\int_{-Rr_k}^{Rr_k}\int_{-Rr_k}^{Rr_k}
\frac{|u(X)-u(Y)|^2}{|X-Y|^2}\,dX\,dY\to 0,\quad \text{as }k\to+\infty
\]
by absolute continuity of the double integral defining
$[u]_{\dot H^{\frac12}(\mathbb R)}$.

By the fractional Poincar\'e inequality on $(-R,R)$, setting $(u_k)_{(-R,R)}=\fint_{-R}^R u_k dx$, we have
\[
\|u_k-(u_k)_{(-R,R)}\|_{L^2((-R,R))}\to 0, \quad \text{as }k\to+\infty.
\]
Since $u_k\to g(0)$ uniformly on $(-R,-R/2)$ as $k\to +\infty$, we have
\[
(u_k)_{(-R,R)}\to g(0), \quad \text{as }k\to+\infty,
\]
hence
\[
u_k\to g(0)
\quad\text{in }L^2((-R,R)),
\]
as $k\to+\infty$, for every $R>0$.

Now fix $0<s<t<\infty$. For $k$ large, $(s/2,2t)\Subset (0,r_k^{-1})$, and $u_k$
is $1/2$-harmonic there. Moreover
\[
\lim_{k\to\infty}[u_k]_{\dot H^{\frac12}((s/2,2t))}=0.
\]
By the interior $\varepsilon$-regularity for $1/2$-harmonic maps (see e.g. \cite[Theorem 1.1]{Sch2}), 
the sequence $(u_k)$ is uniformly bounded in $C^\alpha([s,t])$
for some $\alpha>0$. Since $u_k\to g(0)$ in $L^2((s/2,2t))$, the equicontinuity implies
\[
u_k\to g(0)
\quad\text{uniformly on }[s,t]\text{ as }k\to+\infty.
\]
Taking $s=1/2$, $t=3/2$, we obtain
\[
u(r_k)=u_k(1)\to g(0),\quad  \text{as }k\to+\infty,
\]
contradicting $|u(r_k)-g(0)|\ge\eta$. Thus $u(x)\to g(0)$ as $x\downarrow0$.
\end{proof}


\end{document}